\newcommand{\Xbar}{{\mathchoice
     {\smash@bar\textfont\displaystyle{0.55}{2.5}\mathscr{X}}
     {\smash@bar\textfont\textstyle{0.55}{2.5}\mathscr{X}}
     {\smash@bar\scriptfont\scriptstyle{0.55}{2.5}\mathscr{X}}
     {\smash@bar\scriptscriptfont\scriptscriptstyle{0.55}{2.5}\mathscr{X}}
          }}
\newcommand{\smash@bar}[4]{
     \smash{\rlap{\raisebox{-#3\fontdimen5#10}{$\m@th#2\mkern#4mu\mathchar'26$}}}          }
\newcommand{\X}[1]{\mathscr{\Xbar}_{#1}}
\newcommand{\D}{\mathscr{D}}
\newcommand{\K}{\mathscr{K}}
\newcommand{\LL}{\mathscr{L}}
\NewDocumentCommand{\R}{G{}}{\IfNoValueTF{#1}{\mathbb{R}}{\mathbb{R}^{#1}}}
\newcommand{\T}{\mathsf{T}}
\newcommand{\OO}{\mathscr{O}}
\newcommand{\PP}{\mathscr{P}}
\newcommand{\Cinf}[1]{\mathbf{\mathit{C}}^{\infty}_{#1}}
\newcommand{\cSch}[1]{[\hspace{-0.065cm}[ #1 ]\hspace{-0.065cm}]}
\newcommand{\dd}{\mathrm{d}}
\newcommand{\dlie}[1]{\mathrm{L}_{#1}}
\newcommand{\ec}[1]{\mbox{$#1$}}
\newcommand{\ii}{\mathbf{i}}
\newcommand{\longtwoheadrightarrow}{\relbar\joinrel\twoheadrightarrow}
\newcommand{\orcid}[1]{\href{https://orcid.org/#1}{\includegraphics[width=0.02\textwidth]{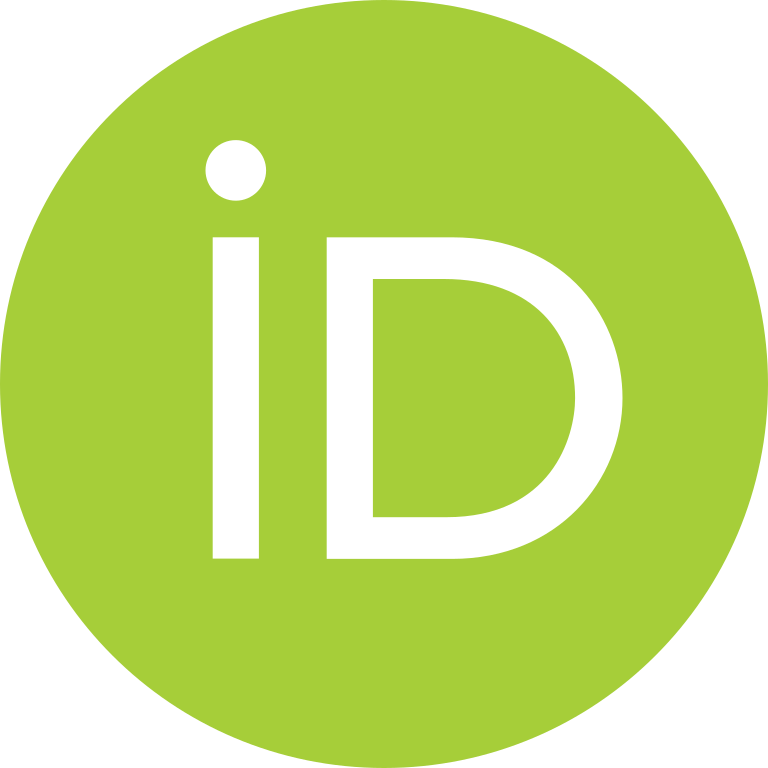}}}
\numberwithin{equation}{section}
\begin{document}

\renewcommand{\PaperNumber}{***}

\thispagestyle{empty}


\ArticleName{On the First Cohomology of Infinitesimal Poisson Algebras} 
\ShortArticleName{Vanishing of the Infinitesimal First Cohomology of Poisson Submanifolds}

\Author{D. Garc\'ia-Beltr\'an~$^{\dag}$
J. C. Ru\'iz-Pantale\'on~$^{{\includegraphics[width=0.015\textwidth]{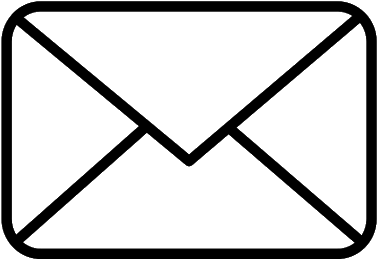}}\,\dag}$ 
and E. Velasco-Barreras~$^{\dag}$
}
\AuthorNameForHeading{Garc\'ia-Beltr\'an, Ru\'iz-Pantale\'on, Velasco-Barreras}

\Address{$^{\dag}$~Departamento de Matem\'aticas, Universidad de Sonora}
\Address{\phantom{$^{\dag}$~}Blvd. Luis Encinas y Rosales, Col. Centro, Hermosillo, Sonora, México, 83000}
\EmailD{$^{1}$\href{mailto:email@address}{dennise.garcia\,@unison.mx},
        $^{{\includegraphics[width=0.015\textwidth]{sobre.png}}}$\href{mailto:email@address}{jose.ruiz\,@unison.mx},
        $^{2}$\href{mailto:email@address}{eduardo.velasco\,@unison.mx}
        }


\Abstract{For the so-called infinitesimal Poisson algebras encoding first-order jets of Poisson submanifolds, we provide a description of their first cohomology in terms of intrinsic cohomologies of the underlying Poisson submanifold. We establish a natural mapping from their first cohomology to the first Poisson cohomology of the corresponding Poisson submanifold. Moreover, we formulate necessary and sufficient conditions for the vanishing of the first cohomology of infinitesimal Poisson algebras. Finally, we consider the special cases of symplectic leaves and, more generally, Poisson submanifolds with partially split first-order jets. In particular, we derive cohomological necessary conditions for the partially split property.}


\Keywords{Poisson structure, Poisson cohomology, Poisson submanifold, Lie algebra, contravariant derivative, Lie algebroid extension, partially split}

\Classification{53D17, 17B63, 17B56, 53C05, 17B60, 16W25}

    \section{Introduction}

Understanding the cohomology of Poisson algebras is a fundamental problem in Poisson Geometry as it provides information about derivations, deformations, and certain invariants of Poisson brackets (see, for example, \cite{Duf05,Laur2013} and reference therein). From a geometric perspective, on a Poisson manifold, these cohomologies often encode obstructions to various local or semi-local properties, such as linearizability and rigidity around Poisson submanifolds, as well as global properties such as unimodularity \cite{We97} and strong properness (see \cite[Proposition 7.3]{pmct1}). They can also be used to distinguish inequivalent Poisson manifolds under gauge or Morita equivalence \cite{GinzMorita}. However, computing these cohomologies is typically a very difficult problem (see, for example, \cite{Va94,Duf05,Laur2013,CraFeMa21} and reference therein).

On the other hand, the linearization problem for Poisson manifolds naturally leads to questions regarding the existence of linear models for Poisson structures in neighborhoods of distinguished submanifolds \cite{Wei83, Conn85, Duf05, CraFeMa21}. Linear models for Poisson bivector fields are known to exist around symplectic leaves \cite{Vo01}, Poisson transversals \cite{FrMNormal}, and Poisson submanifolds with partially split first jet \cite{FerMar22}. However, such models do not generally exist for arbitrary Poisson submanifolds (see \cite[Section 9]{FerMar22}).

Although Poisson manifolds may not admit linear models around arbitrary Poisson submanifolds, there always exists a first-order jet. In the literature, the following equivalent approaches to this notion are considered (see, \cite[Proposition 5.1.8]{Mar13}):
    \begin{enumerate}[label=(\arabic*)]
        \item A Poisson structure on the algebra of fiberwise affine functions on the normal bundle, compatible with the Lie algebra of co-normal sections and the Poisson structure of the submanifold \cite{Mar12, RuGaVo20} (see also \cite{GaRuVo23}). \label{item:Approach1}
        \item A Lie algebroid extension of the cotangent Lie algebroid of the submanifold whose kernel is the co-normal bundle \cite{Mar12}. This can be also formulated in terms of infinitesimally multiplicative closed 2-forms \cite{FerMar22}.
        \item An equivalence class of bivector fields satisfying the Jacobi identity at first-order \cite{FerMar22}.
    \end{enumerate}

Given a Poisson submanifold, approach \ref{item:Approach1} leads to a Poisson algebra that, via an exponential map, coincides with the bracket of the ambient Poisson manifold at first-order. This Poisson algebra is what we call the \emph{infinitesimal Poisson algebra} of the Poisson submanifold \cite{RuGaVo20}.

Motivated by this observation, the purpose of this paper is to study certain aspects of the first cohomology \ec{\mathrm{H}^{1}(\PP)} of the infinitesimal Poisson algebra $\PP$ of a Poisson submanifold \ec{(S,\psi)}. Specifically:
    \begin{itemize}
        \item Express \ec{\mathrm{H}^{1}(\PP)} in terms of the cohomology of \ec{(S,\psi)} and the Lie algebra cohomology of the co-normal bundle $E^{\ast}$ of $S$.
        \item Determine whether there exists a relation between \ec{\mathrm{H}^{1}(\PP)} and the first Poisson cohomology of \ec{(S,\psi)}.
        \item Provide necessary and sufficient conditions for the vanishing of \ec{\mathrm{H}^{1}(\PP)}.
    \end{itemize}

Our approach is based on the fact that the Poisson bracket of $\PP$ can be written in terms of the data \ec{([\cdot,\cdot]_1,\D,\K)} consisting of the Lie bracket of sections \ec{[\cdot,\cdot]_1} on \ec{\Gamma E^\ast}, a contravariant derivative $\D$ on $E^{\ast}$ and bivector field $\K$ on $S$ with values on $E^{\ast}$. These data satisfy compatibility conditions that encode a factorization of the Jacobi identity.

Building upon the theoretical framework of \cite{RuGaVo20,GaRuVo23}, we derive the following results: first, Theorem \ref{teo:ShortExactSeq} presents a splitting of the infinitesimal Poisson algebra of \ec{(S,\psi)},
    \begin{equation}\label{eq:IntroSplit}
        \mathrm{H}^{1}(\PP) \simeq \frac{\operatorname{Im}{\sigma|_{\mathfrak{M}(\PP)}}}{\mathrm{Ham}(S,\psi)} \oplus \frac{\mathrm{H}_{\partial_{\D}}^{1}}{\ker\mathrm{J}} \oplus \frac{\mathfrak{M}_{0}(\PP)}{\mathscr{C}_{0}(\PP) + \operatorname{Inn}{\mathscr{G}}},
    \end{equation}
where each component corresponds to intrinsic cohomologies associated with the Poisson submanifold:
\begin{itemize}
    \item The first factor in the splitting in \eqref{eq:IntroSplit} is a Lie subalgebra of the first Poisson cohomology of \ec{(S,\psi)} that consists of the cohomology clases of Poisson vector fields of $\psi$ which, as derivations of the algebra $\Cinf{S}$, can be extended to a Poisson derivation of $\PP$, 
        \begin{equation*}
            \frac{\operatorname{Im}{\sigma|_{\mathfrak{M}(\PP)}}}{\mathrm{Ham}(S,\psi)} \subseteq \mathrm{H}^1(S,\psi).
        \end{equation*}
    \item \ec{\mathrm{H}_{\partial_{\D}}^{1}} is the first cohomology of the cochain complex consisting of the module of multivector fields on $S$ with values in the center of the Lie algebra \ec{(\Gamma{E^{\ast}},[\cdot,\cdot]_{1})}, and the coboundary operator \ec{\partial_{\D}} given by the restriction of the contravariant differential induced by $\D$ to this module.
    \item \ec{\mathfrak{M}_{0}(\PP) / (\mathscr{C}_{0}(\PP) + \operatorname{Inn}{\mathscr{G}})} is a Lie subalgebra of the first Lie algebra cohomology of  \ec{(\Gamma{E^{\ast}},[\cdot,\cdot]_{1})} with coefficients in the adjoint representation.
\end{itemize}
Moreover, Theorem \ref{teo:ShortExactSeq} generalizes the results of \cite[Section 10]{GaRuVo23} by providing a description of \ec{\mathrm{H}^{1}(\PP)} for arbitrary Poisson submanifolds, which extends the particular cases considered in \cite{GaRuVo23}.


By Theorem \ref{teo:ShortExactSeq}, there exists an $\R{}$-linear mapping from the first cohomology of the infinitesimal Poisson algebra of $S$ to its first Poisson cohomology,
    \begin{equation*}
        \zeta: \mathrm{H}^{1}(\PP) \to \mathrm{H}^1(S,\psi).
    \end{equation*}
A natural question is whether this mapping is surjective. This nontrivial problem has been studied in an algebraic framework for the case of Poisson algebras induced by Poisson modules \cite{Zhu2020}. Below, we provide positive answers to this question for two cases: \emph{symplectic leaves} and \emph{Poisson submanifolds with partially split firstorder jet}. To address the question, we derive some properties of the image of $\zeta$. 

Theorem \ref{teo:H1primprim} shows that the image of $\zeta$ contains the following Lie subalgebras of \ec{\mathrm{H}^1(S,\psi)}:
    \begin{enumerate}[label=(\alph*)]
        \item The image of the first de Rham cohomology of $S$ under the Lichnerowicz homomorphism $\mathrm{H}^{1}_{\mathrm{dR}}(S) \to \mathrm{H}^{1}(S,\psi)$ induced by $\psi$. \label{item:IntroTeo32A}
        \item A Lie subalgebra of the so-called \emph{cotangential} Poisson cohomology of \ec{(S,\psi)}, consisting of cohomology classes in \ec{\mathrm{H}^{1}(S,\psi)} represented by Poisson vector fields of the form \ec{\psi(\theta, \cdot)}, with \ec{\theta \in \Gamma\T^{\ast}S}.
        \item The cohomology classes in \ec{\mathrm{H}^{1}(S,\psi)} represented by the restrictions of Poisson vector fields on the ambient Poisson manifold to $S$. \label{item:IntroTeo32C}
    \end{enumerate}
Furthermore, the cotangential Poisson cohomology of \ec{(S,\psi)} lies in the image of $\zeta$ if the following condition holds: the natural inclusion \ec{\Cinf{S} \hookrightarrow \Cinf{\mathrm{aff}}(E) \simeq \Cinf{S} \oplus \Gamma{E^{\ast}}}, \ec{k \hookrightarrow  k \oplus 0}, induces an inclusion
    \begin{equation}\label{eq:IntroCasim}
        \mathrm{Casim}(S,\psi) \longhookrightarrow \mathrm{Casim}(\PP)
    \end{equation}
from the Lie algebra of Casimir functions of $\psi$ to the Lie algebra of Casimir elements of $\PP$. Here, \ec{\Cinf{\mathrm{aff}}(E)} is the commutative algebra of fiberwise affine functions on $E$. We note that this condition does not always holds. Lemma \ref{lema:CasimP} provides some equivalences.





Results above allows us to formulate necessary and sufficient conditions for the vanishing of the first cohomology of the infinitesimal Poisson algebra of $S$. Taking into account splitting \eqref{eq:IntroSplit}, Theorem \ref{teo:gen_main} states that it is sufficient for \ec{\mathrm{H}^{1}(\PP)=\{0\}} that the following three cohomologies intrinsic to the Poisson submanifold \ec{(S,\psi)} vanish:
    \begin{equation*}
        \mathrm{H}^{1}(S,\psi) = \{0\}, \quad \mathrm{H}_{\partial_{\D}}^{1} = \{0\} \quad \text{and} \quad \mathrm{H}^{1}(\mathscr{G},\mathscr{G}) = \{0\},
    \end{equation*}
Here, \ec{\mathrm{H}^{1}(\mathscr{G},\mathscr{G})} is the first cohomology of the Lie algebra \ec{\mathscr{G} = (\Gamma{E^{\ast}},[\cdot,\cdot]_{1})} with coefficients in the adjoint representation. In particular, as shown in Proposition \ref{prop:loctriv}, if the co-normal bundle of $(S,\psi)$ is a trivial Lie bundle with semisimple typical fiber and \ec{\mathrm{H}^{1}(S,\psi) = \{0\}}, then \ec{\mathrm{H}^{1}(\PP) = \{0\}}.



Taking into account items \ref{item:IntroTeo32A}-\ref{item:IntroTeo32C}, Theorem \ref{teo:VanishLich} states that for \ec{\mathrm{H}^{1}(\PP)=\{0\}}, it is necessary that the Lichnerowicz homomorphism induced by $\psi$ be the zero mapping, and that the restriction to \ec{(S,\psi)} of every Poisson vector field the ambient Poisson manifold that is tangent to $S$ be a Hamiltonian vector field of $\psi$. In particular, for regular Poisson submanifolds satisfying \eqref{eq:IntroCasim}, the vanishing of both the foliated de Rham cohomology of the symplectic foliation of \ec{(S,\psi)} and of the first cohomology \ec{\mathrm{H}_{\partial_{\D}}^{1}} are obstructions to the triviality of \ec{\mathrm{H}^{1}(\PP)}; see Corollary \ref{teo:VanishCasimInReg}.

\paragraph{The Symplectic Leaf Case.}
When the Poisson submanifold \ec{(S,\psi)} is symplectic, by Theorem \ref{teo:SurjSym}, the mapping $\zeta$ is surjective and
    \begin{equation*}
        \mathrm{H}^{1}(\PP) \simeq \mathrm{H}^{1}_{\mathrm{dR}}(S) \oplus \mathrm{H}_{\partial_{\D}}^{1}\oplus\frac{\mathfrak{M}_{0}(\PP)}{\operatorname{Inn}{\mathscr{G}}},
    \end{equation*}
where $\mathrm{H}^{1}_{\mathrm{dR}}(S)$ is the first de Rham cohomology of $S$. This result provide an infinitesimal version of those presented in \cite{VeVo18} for the description and vanishing of the germ of the first Poisson cohomology of a symplectic leaf.

In particular, for simply connected symplectic leaves with semisimple isotropy Lie algebras \cite{CraFeMa21}, the first cohomology of the infinitesimal Poisson algebra is trivial; see Proposition \ref{prop:SymplecticVanish}.

\paragraph{The Partially Split Case.}
If the Poisson submanifold \ec{(S,\psi)} admits a partially split first-order jet \cite{FerMar22}, then Theorem \ref{teo:PartiallySplit} establishes that the mapping $\zeta$ is surjective and
    \begin{equation*}
        \mathrm{H}^{1}(\PP) \simeq \mathrm{H}^1(S,\psi) \oplus \mathrm{H}_{\partial_{\D}}^{1} \oplus \frac{\mathfrak{M}_{0}(\PP)}{\operatorname{Inn}{\mathscr{G}}}.
    \end{equation*}
Moreover, condition \eqref{eq:IntroCasim} holds. 

Consequently, if the mapping $\zeta$ is not surjective for a Poisson submanifold \ec{(S,\psi)}, then its first-order jet is not partially split.

Finally, we outline the structure of the paper. In Section \ref{sec:BasicNotions}, we review basic notions and preliminary results, including the definitions of tangential and cotangential cohomologies of Poisson manifolds. Section \ref{sec:MainResults} presents the main results of this work. Since several proofs involve technical details, Section \ref{sec:proof} provides the corresponding complete proofs and explicit constructions.

    \section{Basic Notions}\label{sec:BasicNotions}

Here we recall some definitions, notions and facts that are used throughout this work.

\paragraph{Poisson Algebra Cohomology.}
An $R$-Poisson algebra \ec{\PP = (P,\{\cdot,\cdot\})} consists of an associative and commutative unital algebra $P$, over a commutative ring $R$ with unit, endowed with an $R$-Lie bracket \ec{\{\cdot,\cdot\}} compatible with the algebra product on $P$ by the Leibniz rule.

The \emph{cohomology} of $\PP$ is the cohomology of the cochain complex \ec{(\X{P}^{\bullet}:=\oplus_{k \in \mathbb{Z}}\,\X{P}^{k},\dd_{\PP})} \cite{Laur2013}, where \ec{\mathfrak{X}_{P}^{k}} is the $P$-module of $R$-multilinear skew-symmetric $k$-derivations \ec{X:P \times \cdots \times P \to P} and the coboundary operator is defined by
    \begin{multline*}
        \big( \dd_{\PP}X \big)(p_0,\dots,p_{k}) := \sum_{i=0}^{k}(-1)^{i} \big\{ p_{i},X(p_{0},\dots,\widehat{p}_{i},\dots,p_{k}) \big\} \\ 
        + \sum_{0\leq i < j\leq k}(-1)^{i+j} X\big( \{p_{i},p_{j}\},p_{0},\dots,\widehat{p}_{i},\dots,\widehat{p}_{j},\dots,p_{k} \big),
    \end{multline*}
for all \ec{X \in \X{P}^{k}} and \ec{p_0,\dots,p_{k} \in P}. Here, the symbol $\ \widehat{}\ $ denotes omission. 

    \newpage

In particular, the \emph{zeroth cohomology} is the center of the Poisson algebra, whose elements are called Casimir elements of $\PP$. The \emph{first cohomology} of $\PP$ is the quotient of the Lie algebra of Poisson derivations by the Lie ideal of Hamiltonian derivations,
    \begin{equation*}
        \mathrm{H}^{1}(\PP) = \frac{\mathrm{Poiss}(\PP)}{\mathrm{Ham}(\PP)}.
    \end{equation*}
A Poisson derivation of $\PP$ is a derivation $X$ of  the algebra $P$ satisfying \ec{X\{p_{1},p_{2}\} = \{X p_{1},p_{2}\} + \{p_{1},X p_{2}\}}, for \ec{p_{1},p_{2} \in P}. Given \ec{h \in P}, the derivation \ec{\{h, \cdot\}} is called Hamiltonian derivation of $\PP$, associated with the Hamiltonian element $h$.

The standard example of a Poisson algebra is given by the algebra of a Poisson manifold \ec{(S,\psi)}. In this case, \ec{P = \Cinf{S}} is the commutative algebra of smooth functions on $S$ with pointwise multiplication and \ec{\{f,g\}_{\psi} := \psi(\dd f, \dd g)} is the corresponding Poisson bracket on $S$ \cite{Duf05,Laur2013,CraFeMa21}.

\paragraph{Poisson Manifold Cohomology.}
Let \ec{(S,\psi)} be a smooth Poisson manifold. The \emph{Poisson cohomology} of \ec{(S,\psi)} is the cohomology of the cochain complex \ec{(\Gamma\wedge^{\bullet}\T{S},\dd_{\psi})} \cite{Li77}, induced by the coboundary operator
    \begin{equation*}
        \dd_{\psi} := \cSch{\psi,\cdot}.
    \end{equation*}
Here, \ec{\cSch{\cdot,\cdot}} is the Schouten-Nijenhuis bracket for multivector fields \cite{Sch40,Sch53,Nij55}. We remark that this definition is equivalent to the cohomology of the corresponding Poisson algebra \ec{(\Cinf{S},\{\,,\,\}_{\psi})}. In particular, the \emph{first Poisson cohomology} is the quotient of the Lie algebra of Poisson vector fields by the Lie ideal of Hamiltonian vector fields of $\psi$,
    \begin{equation*}
        \mathrm{H}^{1}(S,\psi) = \frac{\mathrm{Poiss}(S,\psi)}{\mathrm{Ham}(S,\psi)}.
    \end{equation*}
So, the Lie bracket of vector fields inherits a Lie algebra structure on \ec{\mathrm{H}^{1}(S,\psi)}.

\paragraph{Lie Subalgebras of \ec{\mathrm{\mathbf{H}}^{1}(S,\psi)}.}
A vector field $v$ on $S$ is said to be \emph{tangent} to the symplectic foliation $\mathcal{F}$ of $\psi$ if it is tangent to every symplectic leaf. If $\mathcal{F}$ is regular, then $v$ is in the image of the skew-symmetric mapping \ec{\psi^{\sharp}:\Gamma\T^{\ast}S \to \Gamma \T{S}}, \ec{\theta\mapsto\psi(\theta,\cdot)}. However, this may not occur in the general case. So, we introduce the following notion: a vector field $v$ on \ec{(S,\psi)} is said to be \emph{cotangent} if \ec{v=\psi^{\sharp}\theta}, for some \ec{\theta \in \Gamma\T^{\ast}S}. Moreover, we define the Lie algebra of \emph{cotangent Poisson vector fields} of \ec{(S,\psi)} by
    \begin{equation*}
        \mathrm{Poiss}_{\mathrm{cot}}(S,\psi) := \operatorname{Im}{\psi^{\sharp}} \cap \mathrm{Poiss}(S,\psi).
    \end{equation*}
It follows that
    \begin{equation*}
        \mathrm{Poiss}_{\mathrm{cot}}(S,\psi) = \{\psi^{\sharp}\theta \mid \dd{\theta}(\psi^{\sharp}\alpha, \psi^{\sharp}\beta) = 0,\ \text{for all}\ \alpha, \beta \in \Gamma\T^{\ast}S\}.
    \end{equation*}    
    
Consider the Lie subalgebras of Poisson vector fields defined by
    \begin{align*}
        &\mathrm{Poiss}'(S,\psi) := \{v \in \mathrm{Poiss}(S,\psi) \mid [v,w] \in \operatorname{Im}{\psi^{\sharp}},\ \text{for all}\ w \in \mathrm{Poiss}(S,\psi)\}, \\
        &\mathrm{Poiss}_{\mathrm{cot}}'(S,\psi) := \{\psi^{\sharp}\theta \mid \dd{\theta}(\psi^{\sharp}\alpha, \cdot ) = 0,\ \text{for all}\ \alpha \in \Gamma\T^{\ast}S\}, \\
        &\mathrm{Poiss}_{\mathrm{cot}}''(S,\psi) := \{\psi^{\sharp}\theta \mid \dd{\theta} = 0,\}.
    \end{align*}
Note that we have the inclusions
    \begin{equation*}
        \mathrm{Ham}(S,\psi) \,\subseteq\, \mathrm{Poiss}_{\mathrm{cot}}''(S,\psi) \,\subseteq\, \mathrm{Poiss}_{\mathrm{cot}}'(S,\psi) \,\subseteq\, \mathrm{Poiss}_{\mathrm{cot}}(S,\psi) \,\subseteq\, \mathrm{Poiss}'(S,\psi) \,\subseteq\, \mathrm{Poiss}(S,\psi).
    \end{equation*}
Consequently, the following Lie algebras are obstructions to the vanishing of the first Poisson cohomology of \ec{(S,\psi)}:
    \begin{equation}\label{eq:CohoLieSub}
        \frac{\mathrm{Poiss}_{\mathrm{cot}}''(S,\psi)}{\mathrm{Ham}(S,\psi)} \,\subseteq\, \mathrm{H}^{1}_{\mathrm{cot}}(S,\psi)' := \frac{\mathrm{Poiss}_{\mathrm{cot}}'(S,\psi)}{\mathrm{Ham}(S,\psi)} \,\subseteq\, \mathrm{H}^{1}_{\mathrm{cot}}(S,\psi) := \frac{\mathrm{Poiss}_{\mathrm{cot}}(S,\psi)}{\mathrm{Ham}(S,\psi)} \\ 
        \subseteq\, \frac{\mathrm{Poiss}'(S,\psi)}{\mathrm{Ham}(S,\psi)}.
    \end{equation}
Note that \ec{\mathrm{Poiss}_{\mathrm{cot}}''(S,\psi) / \mathrm{Ham}(S,\psi)} is just the image of the first de Rham cohomology of $S$ under the natural mapping in cohomology induced by $\psi^{\sharp}$, called the \emph{Lichnerowicz homomorphism} \cite{Va94}. Moreover, if $\mathcal{F}$ is regular, then \ec{\mathrm{H}^{1}_{\mathrm{cot}}(S,\psi)} is just the \emph{tangential first Poisson cohomology} of \ec{(S,\psi)} \cite{Li82}. So, in this case, we have that
    \begin{equation}\label{eq:TangCohom}
        \mathrm{H}^{1}_{\mathrm{cot}}(S,\psi) \simeq \mathrm{H}^{1}_{\mathcal{F}}(S), 
    \end{equation}
where \ec{\mathrm{H}^{1}_{\mathcal{F}}(S)} is the leafwise de Rham cohomology of $S$ \cite{Game2002}. In particular, if \ec{(S,\psi)} is symplectic, then the four quotients in \eqref{eq:CohoLieSub} coincide. In the general case, we call \ec{\mathrm{H}^{1}_{\mathrm{cot}}(S,\psi)} the \emph{cotangential first Poisson cohomology} of \ec{(S,\psi)}.

\begin{remark}
If we call a multivector field on $S$ \emph{cotangent} when it belongs to the image of the morphism \ec{\wedge^{\bullet}\psi^{\sharp}: \wedge^{\bullet}\T^{\ast}S \to \wedge^{\bullet}\T{S}}, then they give rise to a cochain subcomplex of \ec{(\Gamma\wedge^{\bullet}\T{S},\dd_{\psi})} whose cohomology can be called the \emph{cotangential Poisson cohomology} of \ec{(S,\psi)}, which in degree one agrees with \ec{\mathrm{H}^{1}_{\mathrm{cot}}(S,\psi)}.
\end{remark}

\paragraph{The Trivial Extension Algebra \ec{\Cinf{\mathrm{aff}}(E)}.}
The $\R{}$-algebra \ec{\Cinf{\mathrm{aff}}(E)} of fiberwise affine functions on a vector bundle \ec{p:E \to S} (in particular, on a normal bundle) fits in the following short exact sequence of \ec{\Cinf{S}}-algebras
    \begin{equation*}
        0 \to \Gamma{E^{\ast}} \longhookrightarrow \Cinf{\mathrm{aff}}(E) \longtwoheadrightarrow \Cinf{S} \to 0,
    \end{equation*}
where the product on \ec{\Gamma{E^{\ast}}} is trivial. The pull-back \ec{p^*:\Cinf{S}\to\Cinf{\mathrm{aff}}(E)} leads to the natural identification \ec{\Cinf{\mathrm{aff}}(E) \simeq \Cinf{S} \oplus \Gamma{E^{\ast}}},
under which the \ec{\R{}}-algebra structure is given by
    \begin{equation}\label{eq:ProductTEA}
        (f\oplus\eta) \cdot (g\oplus\xi) := fg\oplus(f\xi+g\eta).
    \end{equation}

In order to characterize the derivations of \ec{\Cinf{\mathrm{aff}}}, we recall that a \emph{derivative endomorphism} \cite{KosMac2002} of \ec{\Gamma{E^{\ast}}} is an $\R$-linear mapping \ec{\delta:\Gamma{E^{\ast}} \to \Gamma{E^{\ast}}} such that there exists a (unique) \ec{u \in \X{S}}, called the symbol of $\delta$, satisfying
    \begin{equation*}
        \delta(f\eta) = f\,\delta(\eta) + (\dlie{u}{f})\eta, \quad f \in \Cinf{S}, \eta \in \Gamma{E^{\ast}}.
    \end{equation*}
The $\Cinf{S}$-module \ec{\mathbb{D}(\Gamma{E^{\ast}})} of all derivative endomorphisms of \ec{\Gamma{E^{\ast}}} is an $\R$-Lie algebra with the commutator. Moreover,  the $\Cinf{S}$-linear mapping
    \begin{equation}\label{eq:sigma}
        \sigma:\delta\longmapsto \sigma_{\delta} := u
    \end{equation}
is a surjective $\R{}$-Lie algebra morphism. Indeed, every covariant derivative $\nabla$ on $E^{\ast}$ induces a right inverse \ec{\X{S} \ni u \to \nabla_{u} \in \mathbb{D}(\Gamma E^{\ast})}: \ec{\sigma_{\nabla_{u}} = u}. In consequence, the module of derivative endomorphisms of \ec{\Gamma{E^{\ast}}} fit in the following short exact sequence of $\R{}$-Lie algebras:
    \begin{equation}\label{eq:Short1}
        0 \to \Gamma(\mathrm{End}(E^\ast))\longhookrightarrow \mathbb{D}(\Gamma{E^{\ast}}) \overset{\sigma}{\longtwoheadrightarrow} \Gamma \T S \to 0.
    \end{equation}

\begin{lemma}\label{lema:Der1}
Every $\R{}$-linear derivation $X$ of \ec{\Cinf{\mathrm{aff}}(E)} is of the form \ec{X = X_{Q} + X_{\delta}}, where
    \begin{equation*}
        X_{Q}(f \oplus \eta) = 0 \oplus Q(f) \qquad \text{and} \qquad X_{\delta}(f \oplus \eta) = \dlie{\sigma_{\delta}}f \oplus \delta(\eta),
    \end{equation*}
for uniques \ec{Q \in \Gamma(\T S\otimes E^{\ast})} and  \ec{\delta \in \mathbb{D}(\Gamma{E^{\ast}})}, with \ec{f \oplus \eta \in \Cinf{\mathrm{aff}}(E)}. In particular, we have a short exact sequence 
    \begin{equation}\label{eq:Short2}
        0 \to \Gamma(\T S\otimes E^\ast)\overset{j}{\longhookrightarrow} \mathrm{Der}_{\R{}}(\Cinf{\mathrm{aff}}(E)) \longtwoheadrightarrow \mathbb{D}(\Gamma{E^{\ast}}) \to 0,
    \end{equation}
with \ec{j(Q) := X_{Q}}.
\end{lemma}

\begin{proof}
It is easy to see that $X_{Q}$ and $X_{\delta}$ are derivations of \ec{\Cinf{\mathrm{aff}}(E)}. Reciprocally,  given a derivation $X$ of $\Cinf{\mathrm{aff}}(E)$, the key point is that
    \begin{equation}\label{eq:IsXZero}
        \iota_{S}^{\ast} \circ X |_{\Gamma{E^{\ast}}} = 0,
    \end{equation}
with \ec{\iota_{S}: S \hookrightarrow E} the zero section. Indeed, 
we have that \ec{\iota_{S}^{\ast}\circ X|_{\Gamma{E^{\ast}}} = \langle s,\cdot \rangle}, for some \ec{s \in \Gamma{E}} such that \ec{\langle s,\eta \rangle \eta=0} for all \ec{\eta \in \Gamma{E^{\ast}}}, implying that \ec{s=0}. Moreover, we have that \ec{X|_{\Cinf{S}}(f \oplus 0) = \dlie{u}f \oplus Q(f)}, for some \ec{u \in \X{S}}, and \ec{Q \in \Gamma(\T{S} \otimes E^{\ast})} and \ec{X|_{\Gamma{E^{\ast}}} = X_\delta}, for some \ec{\delta \in \mathbb{D}(\Gamma{E^{\ast}})} satisfying \ec{\sigma_{\delta}=u}. 
\end{proof}

In other words, we have a parametrization of the derivations of \ec{\Cinf{\mathrm{aff}}(E)} by pairs consisting of a $\Gamma{E^{\ast}}$-valued vector field on $S$ and a derivative endomorphism of $\Gamma{E^{\ast}}$. We use this characterization to derive the main results of this work.

Given \ec{u \in \Gamma\T{S}}, we say that a derivation \ec{X=X_{Q} + X_{\delta}} is an \emph{extention} of $u$ to \ec{\Cinf{\mathrm{aff}}(E)} if \ec{\sigma_{\delta}=u}. Clearly, such an extension is not unique.

\begin{remark}
In the more general setting of derivations of trivial extension algebras, unlike the geometric case, the term corresponding to the left-hand side of \eqref{eq:IsXZero} does not necessarily vanish \cite{GaRuVo23} (see also \cite{Zhu2020}).
\end{remark}

    \section{Setting of the Problem and Main Results} \label{sec:MainResults}

Let \ec{(S,\psi)} be an embedded Poisson submanifold of a Poisson manifold \ec{(M,\pi)}. Recall from \cite{RuGaVo20} (see also \cite{GaRuVo23}) that there exists a Poisson algebra that provides a first-order approximation of \ec{(\Cinf{M}, \{\cdot,\cdot\}_{\pi})} around $S$, in the following sense: \emph{given an exponential map \ec{\mathbf{e}:E \rightarrow M}, there exists a Poisson algebra
    \begin{equation*}
        \PP=(\Cinf{\mathrm{aff}}(E),\{\cdot,\cdot\}^{\mathrm{aff}})      
    \end{equation*}
such that
    \begin{equation*}
        \{\phi_{1} \circ \mathbf{e}^{-1},\phi_{2} \circ \mathbf{e}^{-1}\}_{\pi} \circ \mathbf{e} \,=\, \{\phi_{1},\phi_{2}\}^{\mathrm{aff}} + \OO(2), \quad \phi_{1},\phi_{2} \in \Cinf{\mathrm{aff}}(E);
    \end{equation*}
around the zero section \ec{S \hookrightarrow E}}.
Here, $E$ is the normal bundle of $S$ and \ec{\Cinf{\mathrm{aff}}(E)} is the algebra of fiberwise affine functions on $E$ with product \eqref{eq:ProductTEA}. Furthermore, different exponential maps give rise to isomorphic Poisson algebras. Thus, the approximating Poisson algebra is termed the \emph{infinitesimal Poisson algebra of $S$}.

We address the following problems concerning the first cohomology \ec{\mathrm{H}^{1}(\PP)} of the infinitesimal Poisson algebra of $S$:
    \begin{itemize}
        \item Express \ec{\mathrm{H}^{1}(\PP)} in terms of the cohomology of \ec{(S,\psi)} and the Lie algebra cohomology of $E^{\ast}$.
        \item Determine whether there exists a relation between \ec{\mathrm{H}^{1}(\PP)} and the first Poisson cohomology of \ec{(S,\psi)}.
        \item Provide necessary and sufficient conditions for the vanishing of \ec{\mathrm{H}^{1}(\PP)}.
    \end{itemize}
To this end, we first recall some notions and facts. 

It is known that the co-normal bundle \ec{E^{\ast} \simeq \T{S^\circ}} over \ec{(S,\psi)} is a bundle of Lie algebras not necessarily locally trivial (see, for example, \cite{CraFeMa21}).
Indeed, we have a \ec{\Cinf{S}}-Lie algebra
    \begin{equation}\label{eq:GLieAlg}
        \mathscr{G}:=(\Gamma E^{\ast}, [\cdot,\cdot]_1)
    \end{equation}
characterized by \ec{[\mathrm{d}f|_S,\mathrm{d}g|_S]_1=\mathrm{d}\{f,g\}_{\pi}|_S} for all \ec{f,g\in C^{\infty}_{M}} vanishing on $S$. 

The Lie algebra \eqref{eq:GLieAlg} and the choice of an exponential map \ec{\mathbf{e}:E \rightarrow M} allow us to write the Poisson bracket of $\PP$ as follows \cite{RuGaVo20} (see also \cite[Chapter 5]{Mar13}):
    \begin{equation}\label{eq:Bracket_PT}
        \{f \oplus \eta, g \oplus \xi\}^{\mathrm{aff}} := \psi(\dd{f},\dd{g}) \oplus \big( \D_{\dd{f}}\xi - \D_{\dd{g}} \eta + [\eta, \xi]_{1} + \K(\dd{f},\dd{g}) \big), \quad f\oplus\eta, g\oplus\xi \in \Cinf{\mathrm{aff}}(E);
    \end{equation}
where the $\mathbf{e}$-depending data \ec{(\D,\K)} consist of (see \cite[Section 4]{RuGaVo20})
    \begin{itemize}
      \item a contravariant derivative \ec{\D: \Gamma\T^{\ast}S \times \Gamma E^* \longrightarrow \Gamma E^*} on $E^{\ast}$; and

      \item a \ec{E^*}-valued bivector field \ec{\K \in \Gamma(\wedge^{2}\T{S} \otimes E^*)};
    \end{itemize}
and satisfy conditions \eqref{eq:PT1}-\eqref{eq:PT3}. Moreover, we can define a cochain complex
    \begin{equation}\label{eq:Cochain}
        \big( \mathfrak{Z}^{\bullet},\partial_{\D} \big),
    \end{equation}
where \ec{\mathfrak{Z}^{\bullet} := \oplus_{k \in \mathbb{Z}}\,\Gamma (\wedge^{k}\T{S}\otimes Z_{\mathscr{G}})} is the graded \ec{\Gamma(\wedge^{\bullet}\T{S})}-module  of multivector fields on $S$ with values on the center \ec{Z_{\mathscr{G}}} of \ec{\mathscr{G}}, and the coboundary operator $\partial_{\D}$ is the restriction to \ec{\mathfrak{Z}^{\bullet}} of the contravariant differential \ec{\dd_\D} in \eqref{eq:dQ} induced by $\D$. Moreover, by formula \eqref{eq:Bracket_PT}, the mapping $j$ in \eqref{eq:Short2} induces an $\R{}$-linear mapping in cohomology,
    \begin{equation}\label{eq:J}
        \mathrm{J}:\mathrm{H}_{\partial_{\D}}^{1} \longrightarrow \mathrm{H}^{1}(\PP),
    \end{equation}
where \ec{\mathrm{H}_{\partial_{\D}}^{1}} is the first cohomology of the cochain complex \ec{\big( \mathfrak{Z}^{\bullet},\partial_{\D} \big)}.

We now proceed to formulate the main results of this work.

\paragraph{First Poisson Cohomology.}
We present a description of the first cohomology \ec{\mathrm{H}^{1}(\PP)} of the infinitesimal Poisson algebra of $S$. To this end, we consider the following objects:
\begin{itemize}
    \item \ec{\mathrm{Ham}(S,\psi)}, the Lie algebra of Hamiltonian vector fields of $S$;
    \item \ec{\operatorname{Inn}{\mathscr{G}}}, the Lie ideal of inner derivations of the Lie algebra \ec{\mathscr{G}} in \eqref{eq:GLieAlg}; and
    \item \ec{\mathfrak{M}(\PP),\,\mathscr{C}(\PP),\,\mathfrak{M}_{0}(\PP)} and \ec{\mathscr{C}_{0}(\PP)}, the submodules of \ec{\mathbb{D}(\Gamma{E^{\ast}})} defined in \eqref{eq:M}, \eqref{eq:C}, \eqref{eq:M0} and \eqref{eq:C0}, respectively;
\end{itemize}

    \newpage

\begin{theorem}\label{teo:ShortExactSeq}
The first cohomology of the infinitesimal Poisson algebra of a Poisson submanifold \ec{(S,\psi)} fits in the following diagram of short exact sequences:
    \begin{equation*}
        \xymatrix{
            & & & 0 \ar[d] \\
            & & & \displaystyle\frac{\mathfrak{M}_{0}(\PP)}{\mathscr{C}_{0}(\PP) + \operatorname{Inn}{\mathscr{G}}}\ar@{^{(}->}[d] \\
            0 \ar[r] & \displaystyle\frac{\mathrm{H}_{\partial_{\D}}^{1}}{\ker{\mathrm{J}}} \,\ar@{^{(}->}[r]
                & \mathrm{H}^{1}(\PP) \ar@{->>}[r] & \displaystyle\frac{\mathfrak{M}(\PP)}{\mathscr{C}(\PP)+\operatorname{Inn}{\mathscr{G}}} \ar[r] \ar@{->>}[d] & 0 \\
            & & & \displaystyle\frac{\operatorname{Im}{\sigma|_{\mathfrak{M}(\PP)}}}{\mathrm{Ham}(S,\psi)} \ar[d] \\
            & & & 0 }
    \end{equation*}
Here, $\sigma$ and $\mathrm{J}$ are the mappings given in \eqref{eq:sigma} and \eqref{eq:J}, respectively.
\end{theorem}
The \nameref{para:TeoShortExactSeq} is given in Section \ref{sec:proof}. Intuitively, the horizontal sequence of the diagram is induced by the short exact sequence in \eqref{eq:Short2}, and the vertical one by the  short exact sequence in \eqref{eq:Short1}. In particular, Theorem \ref{teo:ShortExactSeq} says that \ec{\mathrm{H}^{1}(\PP)} can be described in terms of 
equivalence classes of vector fields on $S$ with values in the center of $\mathscr{G}$, derivative endomorphisms of \ec{\Gamma{E^{\ast}}} and Poisson vector fields of $\psi$.  More precisely, the Lie algebras in the diagram are related with some intrinsic cohomologies of \ec{(S,\psi)}:
\begin{itemize}
    \item \ec{\mathrm{H}_{\partial_{\D}}^{1}} is the first cohomology of the cochain complex in \eqref{eq:Cochain} associated with the center of the Lie algebra $\mathscr{G}$ in  \eqref{eq:GLieAlg}.
    \item \ec{\mathfrak{M}_{0}(\PP) / (\mathscr{C}_{0}(\PP) + \operatorname{Inn}{\mathscr{G}})} admits an inclusion to the first Lie algebra cohomology of \ec{\mathscr{G}} with coefficients in the adjoint representation, and
    \item the Lie subalgebra
        \begin{equation}\label{eq:Inclusion}
            \frac{\operatorname{Im}{\sigma|_{\mathfrak{M}(\PP)}}}{\mathrm{Ham}(S,\psi)} \subseteq \mathrm{H}^1(S,\psi)
        \end{equation}
    of the first Poisson cohomology of \ec{(S,\psi)} that consists of the cohomology clases of Poisson vector fields of $\psi$ which, as derivations of the algebra $\Cinf{S}$, can be extended to a Poisson derivation of $\PP$.
\end{itemize}

\paragraph{The Mapping \ec{\boldsymbol{\mathrm{H}^{1}(\PP) \to \mathrm{H}^1(S,\psi)}}.}
As a consequence of Theorem \ref{teo:ShortExactSeq}, the first cohomology of the infinitesimal Poisson algebra of $S$ and its first Poisson cohomology are related. More precisely, taking into account inclusion \eqref{eq:Inclusion}, there exists an $\R{}$-linear mapping
    \begin{equation}\label{eq:ZetaMap}
        \zeta: \mathrm{H}^{1}(\PP) \to \mathrm{H}^1(S,\psi),
    \end{equation}
given by the following composition:
    \begin{equation*}
        \mathrm{H}^{1}(\PP) \longtwoheadrightarrow \frac{\mathfrak{M}(\PP)}{\mathscr{C}(\PP) + \operatorname{Inn}{\mathscr{G}}} \longtwoheadrightarrow \frac{\operatorname{Im}{\sigma|_{\mathfrak{M}(\PP)}}}{\mathrm{Ham}(S,\psi)} \longhookrightarrow \mathrm{H}^1(S,\psi)
    \end{equation*}
The mapping $\zeta$ is not surjective in general, and the question of whether it is or not is non-trivial (see, for example, \cite{Zhu2020,GaRuVo23}). Theorems \ref{teo:SurjSym} and \ref{teo:PartiallySplit} provide a positive answer to this question in some cases.

Clearly, \ec{\operatorname{Im}{\zeta} = \operatorname{Im}{\sigma|_{\mathfrak{M}(\PP)}} / \mathrm{Ham}(S,\psi)}. So, in view of the interpretation of the Lie subalgebra in \eqref{eq:Inclusion} given above, it is important to know some properties of the image of $\zeta$ in order to understand the first cohomology of $\PP$.

\begin{theorem}\label{teo:H1primprim}
For a Poisson submanifold \ec{(S,\psi)}, the image of the mapping \eqref{eq:ZetaMap} contains the following Lie subalgebras of \ec{\mathrm{H}^{1}(S,\psi)}:
\begin{enumerate}[label=(\textrm{\alph*}) ]
    \item the image under the Lichnerowicz homomorphism \ec{\mathrm{H}^{1}_{\mathrm{dR}}(S)\to\mathrm{H}^{1}(S,\psi)} induced by $\psi$ of the first de Rham cohomology of $S$; \label{item:TeoImag1}
    
    \item the quotient \ec{\mathrm{H}^{1}_{\mathrm{cot}}(S,\psi)'} defined in \eqref{eq:CohoLieSub}; and \label{item:TeoImag2}
    
    \item the cohomology classes of restrictions to \ec{(S,\psi)} of Poisson vector fields of \ec{(M,\pi)} tangent to $S$: \label{item:cH1primprim}
        \begin{equation}\label{eq:PoissRestrict}
            \frac{\{v \in \mathrm{Poiss}(S,\psi) \mid v = Z|_{S},\, Z \in \mathrm{Poiss}(M,\pi)\}}{\mathrm{Ham}(S,\psi)}.
        \end{equation}
\end{enumerate}
Moreover, if \ec{(S,\psi)} satisfies the condition
    \begin{equation}\label{eq:CasimIn}
        \mathrm{Casim}(S,\psi) \oplus \{0\} \subseteq \mathrm{Casim}(\PP),
    \end{equation}
then the image of the mapping \eqref{eq:ZetaMap} also contains the cotangential first Poisson cohomology \ec{\mathrm{H}^{1}_{\mathrm{cot}}(S,\psi)} defined in \eqref{eq:CohoLieSub}.
\end{theorem}
The \nameref{para:TeoH1primprim} is given in Section \ref{sec:proof}. In particular, item \ref{item:cH1primprim} relies on the fact that every Poisson vector field $Z$ of $\pi$ tangent to $S$ induces a derivation of $\PP$ which decends to the Poisson vector field of $\psi$ given by $Z|_{S}$ \cite{RuGaVo20,GaRuVo23}.

Note that in Theorem \ref{teo:H1primprim}, the Lie algebra in item \ref{item:TeoImag2} contains the Lie algebra in item \ref{item:TeoImag1}, by the inclusions in \eqref{eq:CohoLieSub}. Moreover, the Lie algebra in  \eqref{eq:PoissRestrict} also contains the Lie algebra in item \ref{item:TeoImag1}, since every Poisson vector field \ec{\psi^{\sharp}(\theta)} on \ec{(S,\psi)}, with \ec{\dd\theta = 0}, induces a Poisson vector field $Z$ of \ec{(M,\pi)} that is tangent to $S$ and restrict to \ec{\psi^{\sharp}(\theta)}.

To get more insight on condition \eqref{eq:CasimIn}, we note that every Casimir element of $\PP$ induces a Casimir function of \ec{(S,\psi)}, but the converse is not true in general (see Lemma \ref{lema:CasimP}). In particular, condition \eqref{eq:CasimIn} holds if $S$ only admits locally constant Casimir functions. This occurs, for example, if the characteristic foliation of $S$ is open-book \cite{Ito2014-1,Ito2014-2,Licata24}, or if $S$ is symplectic almost everywhere, such as in the case when $S$ is symplectic or log-symplectic ($b$-symplectic) \cite{Nest1996,Guill2014}. Moreover, as Lemma \ref{lema:CasimP} states, condition \eqref{eq:CasimIn} is equivalent to the fact that
    \begin{equation*}
        \mathrm{Casim}(\PP) \simeq \mathrm{Casim}(S,\psi) \oplus \mathrm{H}^{0}_{\partial_{\D}},
    \end{equation*}
where \ec{\mathrm{H}^{0}_{\partial_{\D}}} is the zeroth cohomology of the cochain complex \ec{\big( \mathfrak{Z}^{\bullet},\partial_{\D} \big)} in \eqref{eq:Cochain}.

For regular Poisson manifolds, Theorem \ref{teo:H1primprim} gives the following:

\begin{corollary}\label{cor:CasimInRegular}
If \ec{(S,\psi)} is a regular Poisson submanifold for which property \eqref{eq:CasimIn} holds, then the image of the mapping \eqref{eq:ZetaMap} contains the tangential first Poisson cohomology of \ec{(S,\psi)}.
\end{corollary}

Due to the isomorphism \eqref{eq:TangCohom}, for regular Poisson submanifolds with property \eqref{eq:CasimIn} the geometric and dynamical features of the characteristic foliation plays a important role in the analysis of the first Poisson cohomology of the infinitesimal Poisson algebra. See, for instance, Corollary \ref{teo:VanishCasimInReg}.

\paragraph{Vanishing of the First Cohomology: Sufficiency Criteria.} We present some sufficient conditions for the vanishing of the first cohomology of the infinitesimal Poisson algebra of $S$. Our first result involves three types of intrinsic cohomologies associated with a Poisson submanifold.

\begin{theorem}\label{teo:gen_main}
The first cohomology of the infinitesimal Poisson algebra of a Poisson submanifold \ec{(S,\psi)} vanishes if the following cohomologies in degree one are trivial:
    \begin{enumerate}[label=(\alph*)]
        \item The Poisson cohomology of $(S,\psi)$.\label{cond2:H1(P0)trivial}
        \item The Lie algebra cohomology of \ec{\mathscr{G}} in \eqref{eq:GLieAlg} with coefficients in the adjoint representation.\label{cond2:inner}
        \item The cohomology of the cochain complex \ec{\big( \mathfrak{Z}^{\bullet},\partial_{\D} \big)} in \eqref{eq:Cochain}.\label{cond2:centerless}
    \end{enumerate}
\end{theorem}
The \nameref{para:Teogen_main} is given in Section \ref{sec:proof}.

As we show in the next paragraphs, item \ref{cond2:centerless} in this theorem is a necessary condition for the case of Poisson submanifolds with property \eqref{eq:CasimIn}. Moreover, items \ref{cond2:H1(P0)trivial} and \ref{cond2:centerless} are necessary conditions if the Poisson submanifold is symplectic.

An immediate consequence of Theorem \ref{teo:gen_main} is the following:

\begin{corollary}\label{cor:main}
The first cohomology of the infinitesimal Poisson algebra of a Poisson submanifold \ec{(S,\psi)} vanishes if
\begin{enumerate}[label=(\alph*)]
    \item the first Poisson cohomology of \ec{(S,\psi)} is trivial;  \label{cond:H1(P0)trivial}
    \item every \emph{\ec{\Cinf{S}}-linear} derivation of \ec{\mathscr{G}} is inner; and \label{cond:inner}
    \item the Lie algebra \ec{\mathscr{G}} is centerless. \label{cond:centerless}
\end{enumerate}
\end{corollary}
We remark that conditions in this corollary are natural but not self-evident since $S$ is a \emph{singular} Poisson submanifold, in general. In particular, we formulate the following criteria:

\begin{proposition}\label{prop:loctriv}
Suppose that the conormal bundle of a Poisson submanifold \ec{(S,\psi)} is a locally trivial Lie bundle. Then, the first cohomology of the infinitesimal Poisson algebra of $S$ vanishes if
    \begin{itemize}
        \item the first Poisson cohomology of \ec{(S,\psi)} is trivial; and
        \item the typical fiber is semisimple.
    \end{itemize}
\end{proposition}

This proposition is consequence of a partition of unity argument and the following known fact: the semisimple assumption  for the typical fiber implies that conditions \ref{cond:inner} and \ref{cond:centerless} in Corollary \ref{cor:main} hold fiberwise (see \cite[Whitehead's Lemma]{Jacob62}).

\paragraph{Vanishing of the First Cohomology: Necessity Criteria.} Now, we present some necessary conditions for the vanishing of \ec{\mathrm{H}^{1}(\PP)}.

\begin{theorem}\label{teo:VanishLich}
If the first cohomology of the infinitesimal Poisson algebra of a Poisson submanifold \ec{(S,\psi)} of \ec{(M,\pi)} vanishes, then
\begin{itemize}
    \item the Lichnerowicz homomorphism \ec{\mathrm{H}^{1}_{\mathrm{dR}}(S)\to\mathrm{H}^{1}(S,\psi)} induced by $\psi$ is the zero mapping; and
    \item every Poisson vector field of \ec{(M,\pi)} tangent to $S$ restricts to a Hamiltonian vector field of $\psi$.
\end{itemize}
Moreover, if $S$ satisfies condition \eqref{eq:CasimIn}, then are trivial:
\begin{itemize}
    \item the cotangential first Poisson cohomology \ec{\mathrm{H}^{1}_{\mathrm{cot}}(S,\psi)} defined in \eqref{eq:CohoLieSub}; and
    \item the first cohomology of the cochain complex \ec{\big( \mathfrak{Z}^{\bullet},\partial_{\D} \big)} in \eqref{eq:Cochain}.
\end{itemize}
\end{theorem}
\begin{proof}
The first three items follow from Theorem \ref{teo:H1primprim} since \ec{\mathrm{H}^{1}(\PP)=0} implies that the image of the mapping in \eqref{eq:ZetaMap} is zero. The last one follows from Theorem \ref{teo:ShortExactSeq} since, by Lemma \ref{lema:CasimP}, condition \eqref{eq:CasimIn} implies that \ec{\ker{\mathrm{J}}=0}.
\end{proof}

As mentioned in \cite{Duf05}, an interesting and largely open question is: under what conditions is the Lichnerowicz homomorphism injective or surjective? So, Theorem \ref{teo:VanishLich} gives a negative criterion for Poisson submanifolds. 

Finally, by Corollary \ref{cor:CasimInRegular} and isomorphism \eqref{eq:TangCohom}, we have the following:

\begin{corollary}\label{teo:VanishCasimInReg}
Let \ec{(S,\psi)} be a regular Poisson submanifold satisfying property \eqref{eq:CasimIn}. If the first cohomology of the infinitesimal Poisson algebra of $S$ vanishes, then the following cohomologies in degree one are trivial:
    \begin{itemize}
        \item the leafwise de Rham cohomology of the characteristic foliation of $S$; and
        \item the cohomology of the cochain complex \ec{\big( \mathfrak{Z}^{\bullet},\partial_{\D} \big)} in \eqref{eq:Cochain}.
    \end{itemize}
\end{corollary}

So, regular Poisson submanifolds with property \eqref{eq:CasimIn} and which have infinitesimal Poisson algebra with trivial first cohomology have \emph{generically} simply connected symplectic leaves.

    \subsection{The Symplectic Leaf Case}

As an application of the previous results, we present a study of the first cohomology of the infinitesimal Poisson algebra of a symplectic leaf of the Poisson manifold \ec{(M,\pi)}.

First, we formulate the following positive answer to the surjectivity problem of the mapping \eqref{eq:ZetaMap}:

\begin{theorem}\label{teo:SurjSym}
If \ec{(S,\psi)} is a symplectic leaf of \ec{(M,\pi )}, then the mapping \ec{\mathrm{H}^{1}(\PP)\to \mathrm{H}^1(S,\psi)} in \eqref{eq:ZetaMap} is surjective. Moreover, the first cohomology of the infinitesimal Poisson algebra of $S$ splits as follows:
    \begin{equation*}
        \mathrm{H}^{1}(\PP) \simeq \mathrm{H}^{1}_{\mathrm{dR}}(S) \oplus \mathrm{H}_{\partial_{\D}}^{1} \oplus \frac{\mathfrak{M}_{0}(\PP)}{\operatorname{Inn}{\mathscr{G}}}.
    \end{equation*}
Here, \ec{\mathrm{H}^{1}_{\mathrm{dR}}(S)} is the first de Rham cohomology of $S$.
\end{theorem}
\begin{proof}
Since $S$ is symplectic the surjectivity follows from item \ref{item:TeoImag1} of Theorem \ref{teo:H1primprim}, which implies that
    \begin{equation}\label{eq:ImH1H1dR}
        \frac{\operatorname{Im}{\sigma|_{\mathfrak{M}(\PP)}}}{\mathrm{Ham}(S,\psi)} \simeq \mathrm{H}^{1}(S,\psi) \simeq \mathrm{H}^{1}_{\mathrm{dR}}(S).
    \end{equation}
On the other hand, by Lemma \ref{lema:CasimP}, we have that \ec{\ker{\mathrm{J}}=0} and \ec{\mathscr{C}_{0}(\PP)=\{0\}} since condition \eqref{eq:CasimIn} automatically holds. Hence, by Theorem \ref{teo:ShortExactSeq}, the splitting follows.
\end{proof}

As mentioned above, the first cohomology \ec{\mathrm{H}_{\partial_{\D}}^{1}} of the cochain complex in \eqref{eq:Cochain} is associated to the center of the Lie algebra $\mathscr{G}$ in \eqref{eq:GLieAlg}, and the quotient \ec{\mathfrak{M}_{0}(\PP) / \operatorname{Inn}{\mathscr{G}}} admits an inclusion to the first Lie algebra cohomology of \ec{\mathscr{G}}. 

Taking into account Theorems \ref{teo:VanishLich} and \ref{teo:SurjSym}, we have the following necessary conditions for the vanishing of \ec{\mathrm{H}^{1}(\PP)}.
    
\begin{corollary}\label{cor:SymplecticLeaf}
If the first cohomology of the infinitesimal Poisson algebra of a symplectic leaf \ec{(S,\psi)} of \ec{(M,\pi)} vanishes, then
    \begin{itemize}
        \item the de Rham cohomology of $S$ and the cohomology of the cochain complex \ec{\big( \mathfrak{Z}^{\bullet},\partial_{\D} \big)} in \eqref{eq:Cochain} are trivial in degree one; and
        \item every Poisson vector field of $\pi$ tangent to $S$ restricts to a Hamiltonian vector field of $\psi$.
    \end{itemize}
\end{corollary}

So, symplectic leaves which have infinitesimal Poisson algebra with trivial first cohomology are \emph{generically} simply connected. With this in mind, we formulate some sufficient conditions for the vanishing of \ec{\mathrm{H}^{1}(\PP)}. To this end, we recall that the co-normal bundle of a symplectic leaf is a locally trivial Lie bundle \cite{Duf05,Mac95}, and the typical fiber is called its \emph{isotropy} Lie algebra \cite{CraFeMa21}.

\begin{proposition}\label{prop:SymplecticVanish}
For simply connected symplectic leaves with semisimple isotropy Lie algebra,
the first cohomology of its infinitesimal Poisson algebra vanishes.
\end{proposition}

This result is a direct consequence of Proposition \ref{prop:loctriv} and isomorphism \eqref{eq:ImH1H1dR}.

    \subsection{The Partially Split Case}

Here, we describe the first cohomology of the infinitesimal Poisson algebra of Poisson submanifolds with partially split first-order jet \cite{FerMar22}. In particular, we establish cohomological obstructions to the partially split property.


As well as the infinitesimal Poisson algebra approach, a characterization of the first-order jet of the Poisson submanifold \ec{(S,\psi)\hookrightarrow(M,\pi)} is provided by the restricted Lie algebroid 
    \begin{equation}\label{eq:AlgbdRestPsplit}
        \big( \T^{\ast}_{S}M, \pi^{\sharp}|_{S}, [\cdot,\cdot]_{S} \big),
    \end{equation}
which arises as the restriction of the cotangent Lie algebroid of \ec{(M,\pi)} to $S$ \cite{Mar12}. Moreover, the restricted Lie algebroid fits in the short exact sequence of Lie algebroids
    \begin{equation*}
        0 \to E^{\ast} \simeq \T^{\circ}S \longhookrightarrow \T^{\ast}_{S}M \overset{r}{\longtwoheadrightarrow} \T{S}^{\circ} \to 0,
    \end{equation*}
where \ec{r:\T^{\ast}_{S}M \to \T^{\ast}S} is the restriction mapping of 1-forms.

    \newpage

We recall from \cite{FerMar22} that the Lie algebroid in \eqref{eq:AlgbdRestPsplit} is said to be \emph{partially split} if it admits an infinitesimally multiplicative connection 1-form, that is, a pair \ec{(L,l)} consisting of an $\R{}$-linear mapping \ec{L:\Gamma\T^{\ast}_{S}M \to \Gamma(\T^\ast S \otimes E)} and a vector bundle mapping \ec{l:\T^{\ast}_{S}M \to E^\ast} such that \ec{l|_{E^\ast}=\mathrm{Id}_{E^\ast}} and satisfying conditions \eqref{eq:PartiallySplit1} (see \cite[Definition A6]{FerMar22}). This implies the existence of a covariant derivative \ec{\nabla:\Gamma\T S\times\Gamma E^\ast\to\Gamma E^\ast} on $E^{\ast}$ and a $E^{\ast}$-valued tensor field \ec{U\in\Gamma(\T S\otimes\T S^\ast\otimes E^\ast)} on $S$, called \emph{coupling data} (see \cite[Section 5.4]{FMEhresmann}).

The key point is that, associated with an infinitesimally multiplicative connection 1-form, there is parameterizing data \ec{(\D,\K)} of the infinitesimal Poisson algebra of $S$ satisfying (Lemma \ref{lemma:PartialSplitData})
    \begin{equation*}
        \D_\alpha = \nabla_{\psi^{\sharp}\alpha} \quad \text{and} \quad \K(\alpha,\beta)=U(\alpha,\psi^{\sharp}\beta),
    \end{equation*}
for all \ec{\alpha,\beta \in \Gamma\T^{\ast}S}. This observation allows us to give a positive answer to the surjectivity question of the mapping \eqref{eq:ZetaMap}.

\begin{theorem}\label{teo:PartiallySplit}
If a Poisson submanifold \ec{(S,\psi)} admits a partially split first-order jet, then the mapping in \eqref{eq:ZetaMap},
    \begin{equation*}
        \mathrm{H}^{1}(\PP) \to \mathrm{H}^1(S,\psi),
    \end{equation*}
is surjective. Moreover, the first cohomology of the infinitesimal Poisson algebra of $S$ admits the following splitting:
    \begin{equation}\label{eq:H1PartialSplit}
        \mathrm{H}^{1}(\PP) \simeq \mathrm{H}^1(S,\psi) \oplus \mathrm{H}_{\partial_{\D}}^{1} \oplus \frac{\mathfrak{M}_{0}(\PP)}{\operatorname{Inn}{\mathscr{G}}}.
    \end{equation}
Here, \ec{\mathrm{H}_{\partial_{\D}}^{1}} is the first cohomology of the cochain complex in \eqref{eq:Cochain} and \ec{{\mathfrak{M}_{0}(\PP)} / {\operatorname{Inn}{\mathscr{G}}}} is a Lie subalgebra of the first cohomology of the Lie algebra in \eqref{eq:GLieAlg} with coefficients in the adjoint representation, where the ideal \ec{\mathfrak{M}_{0}(\PP) \subseteq \mathbb{D}(\Gamma E^{\ast})} is defined in \eqref{eq:M0}.
\end{theorem}
The \nameref{para:TeoPartialSplit} is presented in Section \ref{sec:proof}. Moreover, we have the following necessary conditions for the partially split property.

\begin{proposition}
Let \ec{(S,\psi)} be a Poisson submanifold with partially split first-order jet. Then:
    \begin{enumerate}
        \item\ec{\mathrm{Casim}(S,\psi) \oplus \{0\} \subseteq \mathrm{Casim}(\PP)}.
        \item \ec{\ker{\mathrm{J}} = \{0\}}, where $\mathrm{J}$ is the mapping defined in \eqref{eq:J}.
        \item The coupling data \ec{(\nabla,U)} induces a right inverse \ec{\mathrm{H}^{1}(S,\psi)\hookrightarrow\mathrm{H}^1(\PP)} of the mapping in \eqref{eq:ZetaMap}.
    \end{enumerate}
\end{proposition}

So, taking into account Theorem \ref{teo:VanishLich}, we have the following:

\begin{corollary}
Let \ec{(S,\psi)} be a Poisson submanifold  admitting a partially split first-order jet. If the first cohomology of the infinitesimal Poisson algebra of $S$ vanishes, then
    \begin{itemize}
        \item the Poisson cohomology of \ec{(S,\psi)} and the cohomology of the cochain complex \ec{\big( \mathfrak{Z}^{\bullet},\partial_{\D} \big)} in \eqref{eq:Cochain} are trivial in degree one; and 
        \item every Poisson vector field of \ec{(M,\pi)} tangent to $S$ restricts to a Hamiltonian vector field of $\psi$.
    \end{itemize}
\end{corollary}

Conversely, the following conditions are sufficient for the vanishing of \ec{\mathrm{H}^{1}(\PP)}.

\begin{proposition}
Let \ec{(S,\psi)} be a Poisson submanifold  admitting a partially split first-order jet. Then, the first cohomology of the infinitesimal Poisson algebra of $S$ vanishes if
    \begin{itemize}
        \item the Poisson cohomology of \ec{(S,\psi)} and the Lie algebra cohomology of \ec{\mathscr{G}} in \eqref{eq:GLieAlg} with coefficients in the adjoint representation are trivial in degree one; and 
        \item the Lie algebra $\mathscr{G}$ is centerless.
    \end{itemize}
\end{proposition}

Finally, as a consequence of Theorem \ref{teo:PartiallySplit}, we conclude that if the mapping $\zeta$ defined in \eqref{eq:ZetaMap} is not surjective for a Poisson submanifold \ec{(S,\psi)}, then its first-order jet is not partially split.

    \section{Proof of Main Results}\label{sec:proof}

In this section, we provide the proofs of Theorems \ref{teo:ShortExactSeq}, \ref{teo:H1primprim}, \ref{teo:gen_main} and \ref{teo:PartiallySplit}. To this end, we first present in detail certain notions and facts used in Section \ref{sec:MainResults}.

In what follows, let \ec{(S,\psi)} be the Poisson submanifold of the Poisson manifold \ec{(M,\pi)} with normal bundle $E$ and co-normal bundle \ec{E^{\ast} \simeq \T{S^\circ}}.

\paragraph{Infinitesimal Poisson Algebra.}
Here, we present in detail formula \eqref{eq:Bracket_PT} for the bracket of the infinitesimal Poisson algebra \ec{\PP=(\Cinf{\mathrm{aff}}(E),\cdot,\{\cdot,\cdot\}^{\mathrm{aff}})} of $S$. 

Recall that the cotangent Lie algebroid \ec{(\T^{\ast}M, \pi^{\sharp}, [\cdot,\cdot]_{\pi})} of \ec{(M,\pi)} 
admits a natural restriction to the Poisson submanifold $(S,\psi)$,
    \begin{equation*}
        \big( \T^{\ast}_{S}M, \pi^{\sharp}|_{S}, [\cdot,\cdot]_{S} \big),
    \end{equation*}
and the restriction mapping \ec{r:\T^{\ast}_{S}M \to \T^{\ast}S} leads to a short exact sequence of Lie algebroids over \ec{(S,\psi)}:
    \begin{equation}\label{eq:ExtAlgbd}
        0 \to E^{\ast} \longhookrightarrow \T^{\ast}_{S}M \overset{r}{\longtwoheadrightarrow} \T^{\ast}S \to 0.
    \end{equation}
Here, \ec{(\T^{\ast}S, \psi^{\sharp}, [\cdot,\cdot]_{\psi})} is the cotangent Lie algebroid  of \ec{(S,\psi)}. Moreover, the bracket of sections \ec{[\cdot,\cdot]_1} on \ec{\Gamma E^\ast} gives rise to the Lie algebra in \eqref{eq:GLieAlg}.

The choice of an exponential map \ec{\mathbf{e}:E \rightarrow M} induces a section \ec{h:\T^{\ast} S \to \T^{\ast}_{S}M} (Ehresmann connection) on \eqref{eq:ExtAlgbd}, which gives the following $h$-depending data (see, \cite[Chapter 5]{Mar13} and \cite[Section 4]{RuGaVo20}):
    \begin{itemize}
      \item a contravariant derivative (or \ec{\T^{\ast}S}-connection) on \ec{E^*},
            \begin{equation*}
                \D: \Gamma(\T^{\ast}S) \times \Gamma E^* \longrightarrow \Gamma E^*, \quad
                    (\alpha,\zeta) \longmapsto \D_{\alpha}\zeta := [h(\alpha),\zeta]_{S};
            \end{equation*}

      \item a \ec{E^*}-valued bivector field \ec{\K \in \Gamma(\wedge^{2}\T{S} \otimes E^*)},
            \begin{equation*}
                \K(\alpha,\beta) := [h(\alpha),h(\beta)]_{S} - h[\alpha,\beta]_{\psi}, \quad \alpha, \beta \in \Gamma(\T^{\ast}S).
            \end{equation*}
    \end{itemize}
The data $\D$, $\K$ and $[\cdot,\cdot]_{1}$ satisfy the relations \cite{RuGaVo20}
    \begin{align}
            & \hspace{0.89cm} \D_{\alpha}[\eta,\xi]_{1} = [\D_{\alpha}\eta,\xi]_{1} + [\eta,\D_{\alpha}\xi]_{1}, \label{eq:PT1} \\
            & \hspace{0.89cm} \mathrm{Curv}^{\D}(\alpha,\beta) = [\K(\alpha,\beta),\cdot]_{1}, \label{eq:PT2} \\
            & \underset{(\alpha,\beta,\gamma)}{\mathfrak{S}} \D_{\alpha}\,\mathscr{K}(\beta,\gamma) + \mathscr{K}(\alpha,[\beta,\gamma]_{\psi}) = 0, \label{eq:PT3}
    \end{align}
for all \ec{\alpha,\beta,\gamma \in \Gamma(\T^{\ast}{S})} and \ec{\eta,\xi \in \Gamma{E^{\ast}}}.
Here,
    \begin{equation*}
        \mathrm{Curv}^{\D}(\alpha,\beta) := \D_\alpha\D_\beta -  \D_\beta\D_\alpha - \D_{[\alpha,\beta]_{\psi}}
    \end{equation*}
is the \emph{curvature} of $\D$ and ${\mathfrak{S}}$ denotes cyclic sum. 

Relations \eqref{eq:PT1}-\eqref{eq:PT3} allow us to define the infinitesimal Poisson algebra structure of $S$ by formula \eqref{eq:Bracket_PT}. As mentioned above, varying the exponential map yields isomorphic infinitesimal Poisson algebras.

\begin{remark}
The short exact sequence \eqref{eq:ExtAlgbd} is a particular case of a Lie algebroid extension, and the couple \ec{(\D, \K)} is the corresponding parameterization data (see \cite[Section 2]{Bra10}) (see also \cite{Mac87, Mac88}).
\end{remark}

\paragraph{Induced Cochain Complex.}
Let us describe the cochain complex \ec{\big( \mathfrak{Z}^{\bullet},\partial_{\D} \big)} in \eqref{eq:Cochain} arising in the description of the first cohomology of the infinitesimal Poisson algebra of $S$ given in Theorem \ref{teo:ShortExactSeq}.

The contravariant derivative $\D$ on $E^{\ast}$ induces a contravariant differential 
    \begin{equation*}
        \dd_{\D}:\Gamma(\wedge^{\bullet}\T{S}\otimes E^{\ast})\to\Gamma(\wedge^{\bullet}\T{S}\otimes E^{\ast})
    \end{equation*}
by the formula
    \begin{multline}\label{eq:dQ}
         (\dd_{\D}Q)(\dd f_{0},\ldots, \dd f_{k}) := \sum_{i=0}^{k}(-1)^{i}\,\D_{\dd f_{i}} \big( Q(\dd f_{0}, \ldots, \widehat{\dd f}_{i},\ldots,\dd f_{k}) \big) \\ 
         + \sum_{0 \leq i < j \leq k} (-1)^{i+j}\, Q\big( \dd\,\psi(\dd f_{i},\dd f_{j}),\dd f_{0} \ldots, \widehat{\dd f}_{i},\ldots,\widehat{\dd f}_{j}, \ldots \dd f_{k} \big),
    \end{multline}
for \ec{Q \in \Gamma(\wedge^{k}\T{S}\otimes E^{\ast})} and \ec{f_{0},\ldots,f_{k} \in \Cinf{S}}. 

Denote the center of the Lie algebra \ec{\mathscr{G}} in \eqref{eq:GLieAlg} by
    \begin{equation*}
        Z_{\mathscr{G}} := \{\xi \in \mathscr{G} \mid [\xi, \cdot]_{1} = 0\}.
    \end{equation*}
Consider the graded \ec{\Gamma(\wedge^{\bullet}\T{S})}-module $\mathfrak{Z}^{\bullet} := \scalebox{1.2}{$\oplus$}_{k \in \mathbb{Z}}\,\Gamma (\wedge^{k}\T{S}\otimes Z_{\mathscr{G}})$ of $Z_{\mathscr{G}}$-\emph{valued multivector fields} on $S$.
By relation \eqref{eq:PT1}, the restriction $\partial_{\D}$ of the contravariant differential \ec{\dd_\D} to \ec{\mathfrak{Z}^{\bullet}} is well defined
and, by relation \eqref{eq:PT2}, we get that \ec{\dd_{\D}^{2}} vanishes on \ec{\mathfrak{Z^{\bullet}}},
leading us to the cochain complex in \eqref{eq:Cochain}.

\paragraph{Casimir Elements.}
Now, we present a description of the Casimir elements of the infinitesimal Poisson algebra of \ec{(S,\psi)}.

\begin{lemma}\label{lema:CasimP}
The Lie algebra of Casimir elements of $\PP$ is given by
    \begin{equation*}
        \mathrm{Casim}(\PP) \simeq \{ k \oplus \xi \in \Cinf{\mathrm{aff}}(E) \mid k \in \mathrm{Casim}(S,\psi),\ \D_{\dd{k}} + [\xi, \cdot]_{1} = 0,\ \K( \dd{k}, \cdot ) - \D\xi = 0 \}.
    \end{equation*}
In particular, 
    \begin{equation}\label{eq:0H0CasimS}
        \{0\} \oplus \mathrm{H}^{0}_{\partial_{\D}}\subseteq\mathrm{Casim}(S,\psi),
    \end{equation}
where \ec{\mathrm{H}^{0}_{\partial_{\D}} = \{\xi \in Z_{\mathscr{G}} \mid \partial_{\D}\xi = 0\}} is the zeroth cohomology of the cochain complex \ec{\big( \mathfrak{Z}^{\bullet},\partial_{\D} \big)} in \eqref{eq:Cochain}.
Moreover, the following conditions are equivalent:
    \begin{enumerate}[label=(\alph*)]
        \item \ec{\mathrm{Casim}(\PP) \simeq \mathrm{Casim}(S,\psi) \oplus \mathrm{H}^{0}_{\partial_{\D}}}. \label{item:LemaCasim1}
        \item \ec{\mathrm{Casim}(S,\psi) \oplus \{0\} \subseteq \mathrm{Casim}(\PP)}. \label{item:LemaCasim2}
        \item \ec{\D_{\alpha} = 0} and \ec{\K(\alpha, \cdot ) = 0}, for all \ec{\alpha \in \ker{\psi^{\sharp}}}. \label{eq:DKkernel}
    \end{enumerate}
Any of this conditions imply that 
    \begin{align}\label{eq:KerJ}
        &\mathscr{C}_{0}(\PP) = \{0\}, \nonumber \\
        &\ker{\mathrm{J}} = \big\{ [\K(\dd{k},\cdot) - \D_{(\cdot)}\xi] \mid k \in \mathrm{Casim}(S,\psi),\ \D_{\dd{k}} + [\xi,\cdot]_{1} = 0 \big\} = \{0\},
    \end{align}
where \ec{\mathscr{C}_{0}(\PP)} is the submodule of \ec{\mathbb{D}(\Gamma{E^{\ast}})} defined in \eqref{eq:C0} and \ec{\mathrm{J}:\mathrm{H}_{\partial_{\D}}^{1} \rightarrow \mathrm{H}^{1}(\PP)} is the mapping in \eqref{eq:J}.
\end{lemma}
\begin{proof}
By definition of Casimir element, the lemma's first statement, inclusion \eqref{eq:0H0CasimS} and the equivalence of items \ref{item:LemaCasim1}-\ref{eq:DKkernel} follow directly from formula \eqref{eq:Bracket_PT}.
Now, by definition, it is clear that item \ref{eq:DKkernel} implies \ec{\mathscr{C}_{0}(\PP)=\{0\}}. Finally, by the definition of $\mathrm{J}$ and formula \eqref{eq:Bracket_PT}, the first equality in \eqref{eq:KerJ} holds. Consequently, item \ref{eq:DKkernel} implies that \ec{Q=-\partial_{\D}\eta} for some \ec{\eta\in Z_{\mathscr{G}}}, from which the second equality in \eqref{eq:KerJ} follows.
\end{proof}

\paragraph{The Lie algebra \ec{\boldsymbol{\mathfrak{M}(\PP)}}.}
Recall that a \emph{derivation} of the Lie algebra \ec{\mathscr{G}=(\Gamma E^\ast,[\cdot,\cdot]_1)} in \eqref{eq:GLieAlg} is an $\mathbb{R}$-linear mapping \ec{\ell: \Gamma E^{\ast}\to \Gamma E^{\ast}} such that
    \begin{equation*}
        \ell[\eta,\xi]_{1}=[\ell\eta,\xi]_{1}+[\eta,\ell\xi]_{1}, \quad \eta,\xi \in \Gamma E^*.
    \end{equation*}
We denote by \ec{\mathrm{Der}(\mathscr{G})} the space of all $\mathbb{R}$-linear derivations of $\mathscr{G}$.

Let us recall a useful fact on the Poisson derivations of $\PP$ (see \cite[Corollary 7.8]{GaRuVo23}).

\begin{lemma}\label{lemma:PoissDer}
Every Poisson derivation $X$ of $\PP$ is of the form $X=X_Q+X_\delta$, where \ec{Q\in\Gamma(\T S\otimes E^\ast)} and \ec{\delta\in\mathbb{D}(\Gamma E^*)} satisfy
\begin{align}
    &\delta\in\mathrm{Der}(\mathscr{G}), \label{eq:PoissDer1} \\
    &[\delta,\D_{\dd{f}}] - \D_{\dd\,\dlie{u}f} = [Q(f),\cdot]_1, \label{eq:PoissDer3} \\
    &(\dd_{\D}Q)(\dd f,\dd g) = \delta\big( \K(\dd{f},\dd{g}) \big) - \K\big( \dd\,\dlie{u}f,\dd{g} \big) - \K\big( \dd{f},\dd\,\dlie{u}g \big), \label{eq:PoissDer2}
\end{align}
for all \ec{f,g\in\Cinf{S}}.
Here, \ec{u=\sigma_\delta} is the symbol of $\delta$ and \ec{\dd_{\D}} is the contravariant differential defined in \eqref{eq:dQ}. In partcular, $u$ is a Poisson vector field of \ec{(S,\psi)}.
\end{lemma}

This lemma motivates the definition of the subspace
    \begin{equation}\label{eq:M}
        \mathfrak{M}(\PP) \subset \mathbb{D}(\Gamma{E^{\ast}}) \cap \mathrm{Der}(\mathscr{G})
    \end{equation}
consisting of all \ec{\delta \in \mathbb{D}(\Gamma{E^{\ast}})} that
can be extended to a derivation of the Poisson algebra \ec{\PP}, in the sense that
    \begin{itemize}
        \item \ec{\delta \in \mathrm{Der}(\mathscr{G})}; and 
        \item there exists \ec{\widetilde{Q} \in \Gamma(\T{S}\otimes E^*)} such that \eqref{eq:PoissDer3} and \eqref{eq:PoissDer2} hold for \ec{Q=\widetilde{Q}}.
    \end{itemize}

\begin{lemma}\label{lemma:adjoint}
Inner derivations of the Lie algebra $\mathscr{G}$ belong to \ec{\mathfrak{M}(\PP)}.
\end{lemma}
\begin{proof}
Given \ec{\xi\in\Gamma E^{\ast}}, consider the adjoint mapping \ec{\delta:=[\xi,\cdot]_1} and define \ec{\widetilde{Q}\in\Gamma(\T S\otimes E^\ast)} by $\widetilde{Q}(f):=-\D_{\dd{f}}\xi$. Since $[\cdot,\cdot]_1$ is a Lie bracket, property \eqref{eq:PoissDer1} is satisfied.
Moreover, \eqref{eq:PoissDer2} is consequence of relation \eqref{eq:PT2} since \ec{\sigma_{\delta}=0}. Finally, \eqref{eq:PoissDer3} follows from \eqref{eq:PT1}.
\end{proof}

On the other hand, for each \ec{\theta \in \Gamma\T^{\ast}S}, the derivative endomorphism $\D_\theta\in\mathbb{D}(\Gamma E^\ast)$ satisfies \eqref{eq:PoissDer1}, due to \eqref{eq:PT1}.
The following lemma explicitly describes the obstructions to extending $\D_\theta$ to a Poisson derivation of $\PP$:

\begin{lemma}\label{lema:dThetaSharpAlpha}
For every \ec{\theta \in \Gamma\T^{\ast}S},  the derivative endomorphism \ec{\D_{\theta} \in \mathbb{D}(\Gamma E^{\ast})} satisfies \ec{\sigma_{\D_{\theta}}=\psi^{\sharp}\theta}.
Moreover, we have \ec{\D_\theta\in\mathfrak{M}(\PP)} if the following conditions hold:
    \begin{equation}\label{eq:dTheta}
        \D_{\dd\theta(\psi^{\sharp}\dd{f}, \cdot)}=0 \quad \text{and} \quad  \K\big( \dd\theta(\psi^{\sharp}\dd{f}, \cdot), \cdot\big)=0.
    \end{equation}
\end{lemma}

\begin{proof}
By definition of contravariant derivative, we have
    \begin{equation*}
        \D_{\theta}(f\eta) = f\eta + (\dlie{\psi^{\sharp}\theta}f)\eta, \quad f \in \Cinf{S}, \eta \in \Gamma{E^{\ast}}.
    \end{equation*}
So, \ec{\D_{\theta} \in \mathbb{D}(\Gamma E^{\ast})} and \ec{\sigma_{\D_{\theta}}=\psi^{\sharp}\theta}. 
Now, by relations \eqref{eq:PT2} and \eqref{eq:PT3} and straightforward computations, one can show that \ec{\D_{\theta}} satisfies 
    \begin{align*}
        \D_{\theta} \circ \D_{\dd f}  - \D_{\dd f} \circ \D_{\theta} - \D_{\dd\,\dlie{\psi^{\sharp}\theta}f} &= \big[ \widetilde{Q}(f),\cdot \big]_{1} + \D_{\dd\theta(\psi^{\sharp}\dd{f}, \cdot)}, \\
        (\D_{\theta} \circ \K)(\dd{f},\dd{g}) - \K \big( \dd\,{\dlie{\psi^{\sharp}\theta}f},\dd{g} \big) - \K \big( \dd{f},\dd\,{\dlie{\psi^{\sharp}\theta}g}\big) &= \big( \dd_{\D}\widetilde{Q} \big)(\dd f,\dd g)\\
        &\phantom{=}\ + \K\big( \dd\theta(\psi^{\sharp}\dd{g}, \cdot), \dd{f} \big) + \K\big( \dd\theta(\psi^{\sharp}\dd{f}, \cdot), \dd{g} \big), \nonumber
    \end{align*}
for all \ec{f,g \in \Cinf{S}},
where \ec{\widetilde{Q} := \K(\theta, \cdot) \in \Gamma(\T{S}\otimes E^*)}.
So, if \eqref{eq:dTheta} holds, then equations \eqref{eq:PoissDer1}-\eqref{eq:PoissDer3} are satisfied for $\delta=\D_{\theta}$ and $\widetilde{Q}$.
\end{proof}

\paragraph{Proof of Theorem \ref{teo:ShortExactSeq}.}\label{para:TeoShortExactSeq}
By relations \eqref{eq:PT1}-\eqref{eq:PT3} and direct computations, one can show that \ec{\mathfrak{M}(\PP)} is an $\mathbb{R}$-Lie algebra with the commutator. 
Moreover, Lemma \ref{lema:dThetaSharpAlpha} implies that \ec{\mathfrak{M}(\PP)} contains the space of all derivative endomorphism induced by $\D$ on exact 1-forms,
    \begin{equation}\label{eq:C}
        \mathscr{C}(\PP):=\{\D_{\dd{h}}  \mid h \in \Cinf{S}\}.
    \end{equation}
Indeed, for \ec{\delta=\D_{\dd{h}}} we have
    \begin{equation}\label{eq:SigmaHam}
        \sigma_{\D_{\dd{h}}}=\psi(\dd{h}, \cdot),
    \end{equation}
and we can take \ec{\widetilde{Q}=\K(\dd{h},\cdot)}.

Let \ec{\mathfrak{M}_{0}(\PP)} be the Lie ideal of \ec{\mathfrak{M}(\PP)} consisting of all \ec{\delta \in \mathfrak{M}(\PP)} such that \ec{\sigma_{\delta}=0},
    \begin{equation}\label{eq:M0}
        \mathfrak{M}_{0}(\PP):=\ker{(\sigma|_{\mathfrak{M}(\PP)})}.
    \end{equation}
Note that every element of \ec{\mathfrak{M}_{0}(\PP)} is a \ec{\Cinf{S}}-linear derivation of \ec{\mathscr{G}}. Moreover, \ec{\operatorname{Inn}{\mathscr{G}} \subseteq \mathfrak{M}_{0}(\PP)}, by Lemma \ref{lemma:adjoint}.
Also, define
    \begin{equation}\label{eq:C0}
        \mathscr{C}_{0}(\PP):=\mathfrak{M}_{0}(\PP)\cap \mathscr{C}(\PP) = \{\D_{\dd{k}} \mid k \in \mathrm{Casim}(S,\psi)\}.
    \end{equation}

\begin{lemma}\label{lema:M0}
We have the short exact sequence
    \begin{equation*}
        0 \,\to\, \frac{\mathfrak{M}_{0}(\PP)}{\mathscr{C}_{0}(\PP)+\operatorname{Inn}{\mathscr{G}}}
        \,\longrightarrow\, \frac{\mathfrak{M}(\PP)}{\mathscr{C}(\PP) + \operatorname{Inn}{\mathscr{G}}}
        \,\longrightarrow\, \frac{\operatorname{Im}{\sigma|_{\mathfrak{M}(\PP)}}}{\mathrm{Ham}(S,\psi)}
        \,\to\, 0.
    \end{equation*}
\end{lemma}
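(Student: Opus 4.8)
The plan is to exhibit the three terms as cohomology-type quotients sitting inside a single nine-lemma/snake-lemma situation induced by the Lie algebra morphism $\sigma$. First I would observe that $\sigma|_{\mathfrak{M}(\PP)}\colon \mathfrak{M}(\PP)\to \X{S}$ is a morphism of $\R$-Lie algebras whose kernel is precisely $\mathfrak{M}_{0}(\PP)$ by definition \eqref{eq:M0}, and whose image is the Lie subalgebra $\operatorname{Im}{\sigma|_{\mathfrak{M}(\PP)}}\subseteq \X{S}$ consisting of Poisson vector fields that lift to an element of $\mathfrak{M}(\PP)$. Thus there is a tautological short exact sequence of $\R$-Lie algebras
\begin{equation*}
    0 \,\to\, \mathfrak{M}_{0}(\PP) \,\longrightarrow\, \mathfrak{M}(\PP) \,\xrightarrow{\ \sigma\ }\, \operatorname{Im}{\sigma|_{\mathfrak{M}(\PP)}} \,\to\, 0.
\end{equation*}

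Next I would pass to the quotient by the subspaces $\mathscr{C}_{0}(\PP)+\operatorname{Inn}{\mathscr{G}}$, $\mathscr{C}(\PP)+\operatorname{Inn}{\mathscr{G}}$ and $\mathrm{Ham}(S,\psi)$ in the three slots. For this to produce the claimed exact sequence, the key points to verify are: (a) the map $\sigma$ sends $\mathscr{C}(\PP)+\operatorname{Inn}{\mathscr{G}}$ onto $\mathrm{Ham}(S,\psi)$ — this follows from \eqref{eq:SigmaHam}, since $\sigma_{\D_{\dd f}}=\psi(\dd f,\cdot)=X_f^{\psi}$ is exactly the Hamiltonian vector field of $f$, while $\sigma$ kills $\operatorname{Inn}{\mathscr{G}}\subseteq \mathfrak{M}_{0}(\PP)$; (b) the preimage $\sigma^{-1}\bigl(\mathrm{Ham}(S,\psi)\bigr)\cap \mathfrak{M}(\PP)$ is contained in $\mathfrak{M}_{0}(\PP)+\mathscr{C}(\PP)+\operatorname{Inn}{\mathscr{G}}$, i.e. any $\delta\in\mathfrak{M}(\PP)$ with $\sigma_\delta=X_f^{\psi}$ differs from $\D_{\dd f}$ by an element of $\mathfrak{M}_{0}(\PP)$ — this is immediate because $\delta-\D_{\dd f}\in\mathbb{D}(\Gamma E^*)$ has vanishing symbol, so it lies in $\mathfrak{M}_{0}(\PP)$ provided we check it remains a derivation of $\mathscr{G}$ and satisfies the $\widetilde Q$-conditions, which it does since both $\delta$ and $\D_{\dd f}$ satisfy them and the conditions are affine/linear in $\delta$ with the $\sigma_\delta$-terms matching; and (c) $\bigl(\mathscr{C}_0(\PP)+\operatorname{Inn}{\mathscr{G}}\bigr)=\mathfrak{M}_0(\PP)\cap\bigl(\mathscr{C}(\PP)+\operatorname{Inn}{\mathscr{G}}\bigr)$, which holds because $\operatorname{Inn}{\mathscr{G}}\subseteq\mathfrak{M}_0(\PP)$ and $\mathscr{C}_0(\PP)=\mathfrak{M}_0(\PP)\cap\mathscr{C}(\PP)$ by \eqref{eq:C0}, together with the fact that a sum $\D_{\dd f}+\operatorname{ad}_\zeta$ has vanishing symbol iff $\D_{\dd f}$ does (the symbol of an inner derivation is zero).

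With (a)–(c) in hand, the induced maps
\begin{equation*}
    \frac{\mathfrak{M}_{0}(\PP)}{\mathscr{C}_{0}(\PP)+\operatorname{Inn}{\mathscr{G}}}\longrightarrow \frac{\mathfrak{M}(\PP)}{\mathscr{C}(\PP) + \operatorname{Inn}{\mathscr{G}}}\xrightarrow{\ \bar\sigma\ }\frac{\operatorname{Im}{\sigma|_{\mathfrak{M}(\PP)}}}{\mathrm{Ham}(S,\psi)}
\end{equation*}
are well defined; $\bar\sigma$ is surjective since $\sigma|_{\mathfrak{M}(\PP)}$ is surjective onto its image; the composite is zero by (a); exactness in the middle is precisely statement (b); and injectivity of the first map amounts to $\mathfrak{M}_0(\PP)\cap\bigl(\mathscr{C}(\PP)+\operatorname{Inn}{\mathscr{G}}\bigr)\subseteq\mathscr{C}_0(\PP)+\operatorname{Inn}{\mathscr{G}}$, which is the nontrivial inclusion of (c). I expect the main obstacle to be the careful bookkeeping in (b) and (c): one must confirm that membership in $\mathfrak{M}(\PP)$ — in particular the existence of a compatible $\widetilde Q\in\Gamma(\T S\otimes E^*)$ satisfying both displayed identities in \eqref{eq:M} — is stable under the subtractions performed, using linearity of those identities in $(\delta,\widetilde Q)$ and the already-recorded fact (from \eqref{eq:C}) that $\delta=\D_{\dd f}$ admits $\widetilde Q=-\K(\dd f,\cdot)$. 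Everything else is a formal diagram chase, so I would state (a)–(c) as the substantive content and leave the chase to the reader.
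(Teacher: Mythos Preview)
Your proposal is correct and follows essentially the same approach as the paper: define the two induced maps on quotients and verify well-definedness, injectivity, surjectivity, and exactness in the middle using $\sigma_{\D_{\dd f}}=\psi(\dd f,\cdot)$ and $\sigma|_{\operatorname{Inn}\mathscr{G}}=0$. In fact your checklist (a)--(c) is more explicit than the paper's own argument, which states well-definedness and the injectivity/surjectivity claims but leaves the middle-exactness verification (your point (b)) and the intersection identity (your point (c)) implicit.
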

\begin{proof}
The canonical map
    \begin{equation*}
        \frac{\mathfrak{M}_{0}(\PP)}{\mathscr{C}_{0}(\PP)+\operatorname{Inn}{\mathscr{G}}} \ni
            [\delta]
            \,\longmapsto\,
            [\delta] \in \frac{\mathfrak{M}(\PP)}{\mathscr{C}(\PP) + \operatorname{Inn}{\mathscr{G}}}
    \end{equation*}
is well defined and injective since \ec{\mathfrak{M}_{0}(\PP)\subset \mathfrak{M}(\PP)} and \ec{\mathscr{C}_{0}(\PP)\subset \mathscr{C}(\PP)}. Finally, the \ec{\mathbb{R}}-linear mapping
\begin{equation*}
        \frac{\mathfrak{M}(\PP)}{\mathscr{C}(\PP)+\operatorname{Inn}{\mathscr{G}}} \ni
            [\delta]
            \,\longmapsto\,
            [\sigma_{\delta}] \in \frac{\operatorname{Im}{\sigma|_{\mathfrak{M}(\PP)}}}{\mathrm{Ham}(S,\psi)}
    \end{equation*}
is well defined and surjective since \ec{\sigma|_{\operatorname{Inn}{\mathscr{G}}}=0} and \ec{\sigma_{\D_{\dd {h}}} \in \mathrm{Ham}(S,\psi)} by \eqref{eq:SigmaHam}, for all \ec{h \in \Cinf{S}}.
\end{proof}

\begin{lemma}\label{lema:secH1}
We have the short exact sequence
    \begin{equation*}
        0 \,\longrightarrow\, \frac{\mathrm{H}_{\partial_{\D}}^{1}}{\ker{\mathrm{J}}}
        \,\longrightarrow\, \mathrm{H}^{1}(\PP)
        \,\longrightarrow\, \frac{\mathfrak{M}(\PP)}{\mathscr{C}(\PP) + \operatorname{Inn}{\mathscr{G}}}
        \,\longrightarrow\, 0.
    \end{equation*}
\end{lemma}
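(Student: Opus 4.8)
The plan is to make the right-hand map of the asserted sequence explicit through the parametrization of Lemma~\ref{lema:Der1}, identify its kernel with the image of $\mathrm{J}$, and then read off exactness at the three positions. First recall that a $1$-cocycle of $\PP$ is a derivation $X$ of the commutative algebra $\Cinf{\mathrm{aff}}(E)$ that is moreover a derivation of $\{\cdot,\cdot\}^{\mathrm{aff}}$, while a $1$-coboundary is a Hamiltonian derivation $\{h\oplus\theta,\cdot\}^{\mathrm{aff}}$. By Lemma~\ref{lema:Der1} write $X=X_{\delta}+X_{Q}$ with $(\delta,Q)\in\mathbb{D}(\Gamma{E^{\ast}})\times\Gamma(\T{S}\otimes E^{\ast})$ uniquely determined, and expand the identity $X\{\cdot,\cdot\}^{\mathrm{aff}}=\{X\cdot,\cdot\}^{\mathrm{aff}}+\{\cdot,X\cdot\}^{\mathrm{aff}}$ on a pair $f\oplus\eta,g\oplus\xi$ using \eqref{eq:Bracket_PT}. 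Separating the $\Cinf{S}$- and $\Gamma{E^{\ast}}$-components, the relations decouple: the $\Cinf{S}$-component says precisely that $\sigma_{\delta}$ is a Poisson vector field of $(S,\psi)$; in the $\Gamma{E^{\ast}}$-component, the terms bilinear in $\eta,\xi$ say that $\delta$ is a derivation of $\mathscr{G}$, the terms linear in $\eta$ or $\xi$ (those involving $\D$) reproduce the first defining equation of $\mathfrak{M}(\PP)$ with $\widetilde{Q}$ equal to $Q$ up to sign, and the remaining terms (those involving $\K$) reproduce the second, once the right-hand side is recognized as $\mp(\dd_{\D}Q)(\dd f,\dd g)$ via the definition of $\dd_{\D}$. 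Consequently $X_{\delta}+X_{Q}$ is a $1$-cocycle if and only if $\delta\in\mathfrak{M}(\PP)$ and $Q$ (up to sign) is an admissible witness $\widetilde{Q}$ for $\delta$; and, since the two witness equations are affine in $\widetilde{Q}$, for fixed $\delta\in\mathfrak{M}(\PP)$ the set of admissible $Q$ is a coset of the space of $1$-cocycles of the complex \eqref{eq:Cochain} — the homogeneous part of those equations being exactly the conditions that $\widetilde{Q}$ be $Z_{\mathscr{G}}$-valued (i.e.\ lie in $\mathfrak{Z}^{1}$) and $\partial_{\D}$-closed.

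A parallel expansion, now of $\{h\oplus\theta,\cdot\}^{\mathrm{aff}}$ via \eqref{eq:Bracket_PT}, shows that the Hamiltonian derivation $X_{h\oplus\theta}$ equals $X_{\delta}+X_{Q}$ with $\delta=\D_{\dd h}+[\theta,\cdot]_{1}\in\mathscr{C}(\PP)+\operatorname{Inn}{\mathscr{G}}$ and an explicit $Q$ assembled from $\D\theta$ and $\K(\cdot,\dd h)$; in particular every coboundary contributes a $\delta$-part in $\mathscr{C}(\PP)+\operatorname{Inn}{\mathscr{G}}$. Two special cases will be needed: by the characterization of the previous paragraph, a $1$-cocycle of the form $X_{Q}$ (i.e.\ one with $\delta=0$) is precisely one with $Q$ a $1$-cocycle of \eqref{eq:Cochain}; and, using \eqref{eq:SigmaHam} together with $[c,\cdot]_{1}=0$ for $c\in\mathfrak{Z}^{0}$, one finds $X_{\partial_{\D}c}=-\{0\oplus c,\cdot\}^{\mathrm{aff}}$, so that $\mathrm{J}$ of \eqref{eq:J} is well defined.

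I then define $\Phi:\mathrm{H}^{1}(\PP)\to\mathfrak{M}(\PP)/(\mathscr{C}(\PP)+\operatorname{Inn}{\mathscr{G}})$ by $\Phi[X_{\delta}+X_{Q}]:=[\delta]$: this is independent of the splitting by the uniqueness in Lemma~\ref{lema:Der1}, and independent of the representative of the cohomology class by the coboundary computation above. It is surjective, since for any $\delta\in\mathfrak{M}(\PP)$ one chooses a witness $\widetilde{Q}$ and the corresponding $X_{\delta}+X_{Q}$ is a cocycle mapping to $[\delta]$. For the kernel, if $\Phi[X]=0$ then $\delta=\D_{\dd h}+[\theta,\cdot]_{1}$ for some $h,\theta$; subtracting the coboundary $\{h\oplus\theta,\cdot\}^{\mathrm{aff}}$ yields a cohomologous cocycle of the form $X_{Q'}$, which by the first special case forces $Q'$ to be a $1$-cocycle of \eqref{eq:Cochain}, whence $[X]=[X_{Q'}]=\mathrm{J}[Q']\in\operatorname{Im}\mathrm{J}$; conversely $\operatorname{Im}\mathrm{J}\subseteq\ker\Phi$ because each $X_{Q'}$ has vanishing $\delta$-part. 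Since $\mathrm{J}$ descends to an injection $\mathrm{H}_{\partial_{\D}}^{1}/\ker{\mathrm{J}}\hookrightarrow\mathrm{H}^{1}(\PP)$ with image $\operatorname{Im}\mathrm{J}=\ker\Phi$, and $\Phi$ is surjective, the stated short exact sequence follows.

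The main obstacle is the first step: carrying out the expansion of the cocycle condition through \eqref{eq:Bracket_PT} and verifying, with the help of \eqref{ec:PT1}--\eqref{ec:PT3} and careful sign tracking, that the resulting terms organize exactly into the three defining conditions of $\mathfrak{M}(\PP)$ and that the indeterminacy of the auxiliary field $\widetilde{Q}$ is precisely the space of $\partial_{\D}$-cocycles in $\mathfrak{Z}^{1}$. This is the computational heart of the lemma and is where the somewhat technical definitions of $\mathfrak{M}(\PP)$, $\mathscr{C}(\PP)$ and of the complex \eqref{eq:Cochain} acquire their meaning.
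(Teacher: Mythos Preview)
Your proposal is correct and follows essentially the same approach as the paper: define the map $[X_\delta+X_Q]\mapsto[\delta]$ via Lemma~\ref{lema:Der1} and formula~\eqref{eq:Bracket_PT}, check well-definedness and surjectivity, and identify its kernel with the image of $\mathrm{J}$. You carry out explicitly the expansion of the cocycle and coboundary conditions that the paper's two-line proof leaves implicit, and in particular you verify exactness at the middle term (that $\ker\Phi=\operatorname{Im}\mathrm{J}$), which the paper's proof does not address directly; so your argument is a faithful and more detailed version of the same proof.
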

\begin{proof}
By definition of $\mathrm{J}$, the natural induced $\mathbb{R}$-linear mapping given by
    \begin{equation*}
        {\mathrm{H}_{\partial_{\D}}^{1}} \,/\, {\ker{\mathrm{J}}}
        \,\ni \big\{ [Q] + \ker{\mathrm{J}} \big\}
            \,\longmapsto\,
        [X_{Q}]
        \in \mathrm{H}^{1}(\PP),
    \end{equation*}
is well defined and injective. Moreover, by definition of \ec{\mathfrak{M}(\PP)}, and taking into account formula \eqref{eq:Bracket_PT}, one can show that the $\mathbb{R}$-linear mapping
    \begin{equation*}
        \mathrm{H}^{1}(\PP) \ni
            \big[ X = X_{\delta} + X_{Q} \big]
                \longmapsto
            [\delta] \in \frac{\mathfrak{M}(\PP)}{\mathscr{C}(\PP) + \operatorname{Inn}{\mathscr{G}}}
    \end{equation*}
is well defined and surjective.
\end{proof}

Therefore, Theorem \ref{teo:ShortExactSeq} follows from Lemmas \ref{lema:M0} and \ref{lema:secH1}.

\paragraph{Proof of Theorem \ref{teo:H1primprim}.}\label{para:TeoH1primprim}
Given \ec{[\psi^{\sharp}\theta] \in \mathrm{H}^{1}_{\mathrm{cot}}(S,\psi)'}, 
we have that conditions \eqref{eq:dTheta} hold, by definition of \ec{\mathrm{H}^{1}_{\mathrm{cot}}(S,\psi)'} in \eqref{eq:CohoLieSub}.
By Lemma \ref{lema:dThetaSharpAlpha}, this implies that $\D_\theta\in\mathfrak{M}(\PP)$ and $\sigma_{\D_\theta}=\psi^{\sharp}\theta$.
So, we have a Poisson derivation $X$ of $\PP$ given by \ec{X = X_{\K(\theta,\cdot)}+X_{\D_{\theta}}}
satisfying \ec{\zeta[X] = [\sigma_{\D_\theta}] = [\psi^{\sharp}\theta]} for the mapping \eqref{eq:ZetaMap}.
This proves item \ref{item:TeoImag2}. 
Item \ref{item:TeoImag1} follows from \ref{item:TeoImag2} and the first inclusion in \eqref{eq:CohoLieSub}. 
Finally, item \ref{item:cH1primprim} follows from the fact that 
every \ec{Z \in \mathrm{Poiss}(M,\pi)} tangent to $S$ induces a Poisson derivation of $\PP$ which decends to \ec{Z|_{S} \in \mathrm{Poiss}(S,\psi)}. 
Indeed, by means of a linearization procedure on $E$ \cite{RuGaVo20,GaRuVo23}, $Z$ induces a Poisson derivation \ec{Z^{(2)}} of $\PP$.
By Lemma \ref{lema:Der1}, it admits a decomposition \ec{Z^{(2)} = Z^{(2)}_{\delta} + Z^{(2)}_{Q}}.
Furthermore, $\delta$ is the Lie derivative along a fiberwise linear vector field \ec{\mathrm{var}_{S}Z} on $E$, 
called the \emph{first variation} of $Z$ at $S$ \cite{RuGaVo20},
    \begin{equation*}
        \delta \simeq \dlie{\mathrm{var}_{S}Z} \in \mathfrak{M}(\PP).
    \end{equation*}
Moreover, since \ec{\mathrm{var}_{S}Z\,|_{S} = Z|_{S}}, we have that 
    \begin{equation*}
        \sigma|_{\mathfrak{M}(\PP)}(\delta)=Z|_{S}.
    \end{equation*}
So, by definition of $\zeta$, the class \ec{[Z^{(2)}] \in \mathrm{H}^{1}(\PP)} is such that \ec{\zeta[Z^{(2)}] = [Z|_{S}]}.
This completes the proof.

\paragraph{Proof of Theorem \ref{teo:gen_main}.}\label{para:Teogen_main}
Recall that we have the inclusion \eqref{eq:Inclusion}, and that $\mathfrak{M}_{0}(\PP)/(\mathscr{C}_{0}(\PP) + \operatorname{Inn}{\mathscr{G}})$ also admits an inclusion to the first cohomology of $\mathscr{G}$ with coefficients in the adjoint representation. 
It is then clear that the vanishing of the cohomologies \ref{cond2:H1(P0)trivial}-\ref{cond2:centerless} in degree one implies that $\operatorname{Im}{\sigma|_{\mathfrak{M}(\PP)}}/\mathrm{Ham}(S,\psi)$, $\mathfrak{M}_{0}(\PP)/(\mathscr{C}_{0}(\PP) + \operatorname{Inn}{\mathscr{G}})$, and $\mathrm{H}_{\partial_{\D}}^{1}/\ker\mathrm{J}$ are all trivial.
So, the short exact sequences of the diagram in Theorem \ref{teo:ShortExactSeq}, must be trivial, implying that $\mathrm{H}^{1}(\PP)=0$.

    \subsection{The Partially Split Case}

We now prove Theorem \ref{teo:PartiallySplit}. We begin with the necessary definitions and preliminary results.

\paragraph{Compatible Data.}
We have shown that the image of the mapping $\zeta$ in \eqref{eq:ZetaMap} is central to describe the first cohomology of
the infinitesimal Poisson algebra of the Poisson submanifold \ec{(S,\psi)}, and to establishing necessary and sufficient conditions for its vanishing.
In our approach, we have considered the \emph{cotangential} cohomology classes of $\mathrm{H}^{1}(M,\psi)$, that is, classes with representatives of the form $\psi\theta$ where $\theta \in \Gamma T^*S$,
since the data $\K$ and $\D$ allow us to ``lift'' $\theta$ to a derivation $X$ of $\Cinf{\mathrm{aff}}(E)$ that projects to $\psi^{\sharp}\theta$ (see Lemma \ref{lema:dThetaSharpAlpha}). 
This is the main reason of why $\zeta$ is surjective when $(S,\psi)$ is symplectic, and why $\mathrm{H}^{1}(\PP)$ admits the splitting in Theorem \ref{teo:SurjSym} in this case.

For general Poisson vector fields $u\in\mathrm{Poiss}(S,\psi)$, not necessarily cotangent, to the best of our knowledge there is no general procedure for ``lifting'' $u$ to a Poisson derivation of $\PP$ which, by Lemma \ref{lemma:PoissDer}, is equivalent to construct a derivative endomorphism $\delta$ with $\sigma_\delta=u$ such that \eqref{eq:PoissDer1}-\eqref{eq:PoissDer3} hold.
In the case when $E^*$ is locally trivial as a bundle of Lie algebras, there exists a connection $\nabla:\Gamma\T S\to\mathbb{D}(\Gamma E^\ast)$ with values on $\mathrm{Der}(\mathscr{G})$. In this case, by setting $\delta:=\nabla_u$, we have $\sigma_\delta=u$ and condition \eqref{eq:PoissDer1} holds.

\begin{definition}\label{def:NablaComp}
Let $(S,\psi)$ a Poisson submanifold, $(\D,\K)$ some parametrization data of its infinitesimal Poisson algebra,
$\nabla:\Gamma\T S\to\mathbb{D}(\Gamma E^\ast)$ a connection with values on $\mathrm{Der}(\mathscr{G})$,
and $U\in\Gamma(\T S\otimes\T^\ast S\otimes E^\ast)$.
We say that the data $(\nabla,U)$ is \emph{almost-compatible} with $(\D,\K)$ if
for every Poisson vector field $v\in\mathrm{Poiss}(S,\psi)$ and $Q:=U(\cdot,v)$ we have
    \begin{align*}
    \D_{\dd f} \circ \nabla_{v} - \nabla_{v} \circ \D_{\dd f} + \D_{\dd(\dlie{v}f)} &= \big[ Q(f),\cdot \big]_{1}, \\
    (\nabla_{v} \circ \K)(\dd{f},\dd{g}) - \K \big( \dd\,\dlie{v}{f},\dd{g} \big) - \K \big( \dd{f},\dd\,\dlie{v}{g}\big) &= - \big( \dd_{\D}Q \big)(\dd f,\dd g),
    \end{align*}
for all $f,g \in \Cinf{S}$.
If, in addition, $\D=\nabla\circ\psi^{\sharp}$, and $\K(\alpha,\beta)=U(\alpha,\psi^{\sharp}\beta)$ for all $\alpha,\beta\in\Gamma\T^\ast S$, 
then we say that $(\nabla,U)$ are \emph{compatible} with $(\D,\K)$. 
\end{definition}

\begin{proposition}\label{prop:NablaSobre}
The mapping in \eqref{eq:ZetaMap} is surjective
if $(\D,\K)$ admits an almost-compatible data.
Furthermore, if $(\nabla,U)$ is compatible with $(\D,\K)$, then the mapping \ec{\tau:\mathrm{H}^{1}(S,\psi)\to\mathrm{H}^{1}(\PP)}, \ec{[v]\mapsto[X_{U(\cdot,v)}+X_{\nabla_v}]}
is well-defined and is a right inverse of $\zeta$.
\end{proposition}

\begin{proof}
  If $(\nabla,U)$ is almost-compatible, then by Lemma \ref{lemma:PoissDer} the derivation $X_{U(\cdot,v)}+X_{\nabla_v}$ is Poisson for every $v\in\mathrm{Poiss}(S,\psi)$,
  and satisfies $\zeta[X_{U(\cdot,v)}+X_{\nabla_v}]=[v]$.
  On the other hand, the compatibility condition implies for all $h\in\Cinf{S}$ that $[X_{U(\cdot,\psi^{\sharp}\dd{h})}+X_{\nabla_{\psi^{\sharp}\dd{h}}}]=[X_{\K(\cdot,\dd{h})}+X_{\D_{\dd{h}}}]=[\{h\oplus0,\cdot\}^{\mathrm{aff}}]=0$.
\end{proof}

This proposition holds if $(S,\psi)$ is symplectic, since in this case a compatible data can be given by
$\nabla_v:=\D_{(\psi^{\sharp})^{-1}v}$ and $U(\alpha,v)=\K(\alpha,(\psi^{\sharp})^{-1}v)$.
Furthermore, this is the case for Poisson submanifolds with partially split first-order jet (see the next paragraphs).
The question of whether this procedure can be applied, for instance, to regular submanifolds, is still open.

\paragraph{Poisson Submanifolds with Partially Split Jets.}
Let $(S,\psi)\hookrightarrow(M,\pi)$ be a Poisson submanifold.
with first-order jet given by the restriction \ec{\big( \T^{\ast}_{S}M, \pi^{\sharp}|_{S}, [\cdot,\cdot]_{S} \big)} 
of the cotangent Lie algebroid of $M$ to $S$ \cite{Mar12}.
It admits a Lie algebroid extension 
\begin{equation}\label{eq:jet}
    0 \to E^{\ast} \longhookrightarrow \T^{\ast}_{S}M \overset{r}{\longtwoheadrightarrow} \T^{\ast}S \to 0,
\end{equation}
where $r:\T^{\ast}_{S}M \to \T^{\ast}S$ is the restriction of 1-forms.
The conormal bundle $E^*\simeq\T S^{\circ}$ is a bundle of Lie algebras, 
so we have a Lie algebra $\mathscr{G}:=(\Gamma E^\ast,[\cdot,\cdot]_1)$.
Furthermore, we have a representation (flat $\T^{\ast}_{S}M$-connection) on $E^\ast$ given by $a\mapsto\operatorname{ad}_{a}|_{E^\ast}$.

Recall from \cite{FerMar22} that the first-order jet \ec{\big( \T^{\ast}_{S}M, \pi^{\sharp}|_{S}, [\cdot,\cdot]_{S} \big)}
of a Poisson submanifold $(S,\psi)\hookrightarrow(M,\pi)$ is said to be \emph{partially split} 
if it admits an infinitesimally multiplicative (IM) connection 1-form,
that is, a pair $(L,l)$ consisting of an $\R$-linear mapping $L:\Gamma(\T^{\ast}_{S}M)\to\Gamma(\T^\ast S\otimes E)$ and a vector bundle map $l:\T^{\ast}_{S}M\to E^\ast$ with $l|_{E^\ast}=\mathrm{Id}_{E^\ast}$
satisfying (see \cite[Definition A6]{FerMar22})
\begin{align}\label{eq:PartiallySplit1}
  L(fa)=fL(a)+\dd{f}\otimes l(a), \qquad
  L[a,b]_S = \LL_aL(b) - \LL_bL(a), \qquad
  l[a,b]_S = \LL_al(b) - \ii_{\pi^{\sharp}|_{S}(b)}L(a)
\end{align}
for $a,b\in\Gamma(\T^{\ast}_{S}M)$, where the mapping \ec{\LL_a:\Gamma(\wedge^\bullet\T^\ast S\otimes E)\to\Gamma(\wedge^\bullet\T^\ast S\otimes E)} is
given for every $\gamma\in\Gamma(\wedge^k\T^\ast S\otimes E)$, and $v_1,\ldots,v_k\in\Gamma(\T S)$ by
    \begin{equation*}
        \LL_a\gamma(v_1,\ldots,v_k) := [a,\gamma(v_1,\ldots,v_k)]_S - \sum_{i=1}^{k}\gamma\left(v_1,\ldots,\left[\pi^{\sharp}|_{S}(a),v_i\right],\ldots,v_k\right)
    \end{equation*}
Note that the map $l$ is a (left) splitting of \eqref{eq:jet}, called the \emph{symbol} of $(L,l)$.

Following \cite[Section 5.4]{FMEhresmann}, recall that the IM connection 1-form $(L,l)$ 
gives rise to a connection $\nabla:\Gamma \T S \times \Gamma E^\ast\to\Gamma E^\ast$,
\begin{equation*}
\nabla_v\eta := \ii_vL(\eta), \qquad v\in\Gamma(\T S), \, \eta\in\Gamma E^\ast,
\end{equation*}
and tensor field $U\in\Gamma(\T S\otimes\T^\ast S\otimes E^\ast)$ by
\begin{equation*}
  U(\alpha,v):=-\ii_{v}L(h^l(\alpha)), \qquad \alpha\in\T^\ast S, v\in\T S,
\end{equation*}
where $h^l:\T^\ast S\to\T_S^\ast M$ is the right splitting of \eqref{eq:jet} induced by $l$: $r\circ h^l = \mathrm{Id}_{\T^\ast S}$, and $\mathrm{Im}(h^l)=\ker(l)$.
The pair $(\nabla,U)$ is called the \emph{coupling data} induced by $(L,l)$.

On the other hand, consider the parametrization data $(\D,\K)$ induced by $h^l$,
\begin{align*}
  \D:\Gamma\T^\ast S\times\Gamma E^\ast \to \Gamma E^\ast, &\qquad \D_\alpha\xi := [h^l(\alpha),\xi]_S, \\
  \K\in\Gamma(\wedge^2\T S\otimes E^\ast), &\qquad \K(\alpha,\beta) := [h^l(\alpha),h^l(\beta)]_S - h^l[\alpha,\beta]_{\psi}.
\end{align*}
The parametrization and coupling data of $(L,l)$ relate as follows:

\begin{lemma}\label{lemma:PartialSplitData}
  For $\xi\in\Gamma E^\ast$, and $\alpha,\beta\in\Gamma\T^\ast S$, we have
  \begin{equation}\label{eq:PartialSplitData}
    \D_\alpha\xi = \nabla_{\psi^{\sharp}\alpha}\xi, \qquad\text{and}\qquad \K(\alpha,\beta)=U(\alpha,\psi^{\sharp}\beta).
  \end{equation}
\end{lemma}

\begin{proof}
  Since $l|_{E^\ast}=\mathrm{Id}_{E^\ast}$, and $\D_\alpha\xi\in\Gamma E^\ast$, we have from \eqref{eq:PartiallySplit1} that
  \[
    \D_\alpha\xi = l(\D_\alpha\xi) = -l[\xi,h^l(\alpha)]_S = -l\left(\LL_\xi l(h^l(\alpha)) + \ii_{\pi^{\sharp}|_{S}h^l(\alpha)}L(\xi)\right) = \ii_{\psi^{\sharp}\alpha}L(\xi) =\nabla_{\psi^{\sharp}\alpha}\xi,
  \]
  where we also have used the following facts: $l\circ h^l=0$ and $\pi|_{S}^{\sharp}\circ h = \psi^{\sharp}$.
  Similarly, 
  \begin{align*}
    \K(\alpha,\beta) = l(\K(\alpha,\beta)) &= l[h^l(\alpha),h^l(\beta)]_S\\ 
    &= \LL_{h^l(\alpha)}(l(h^l(\beta)))-\ii_{\pi|_{S}^{\sharp}h^l(\beta)}L(h^l(\alpha)) = -\ii_{\psi^{\sharp}\beta}L(h^l(\alpha)) = U(\alpha,\psi^{\sharp}\beta).
  \end{align*}
\end{proof}

In other words, Lemma \ref{lemma:PartialSplitData} means that 
the parametrization data of a partially split first-order jet of a Poisson submanifold can be chosen as the restriction of some coupling data. 
Furthermore, the coupling data satisfy the following \emph{structure equations} (see \cite[Proposition 5.11]{FMEhresmann}), which imply \eqref{eq:PT1}-\eqref{eq:PT3} for the parametrization data:

\begin{proposition}\label{prop:StructureEquations}
  The coupling data $(\nabla, U)$ satisfy:
  \begin{align}
      &\nabla_v[\eta,\xi]_1 = [\nabla_v\eta,\xi]_1 + [\eta,\nabla_v\xi]_1, \label{eq:Structure1} \\
      &\mathrm{Curv}^{\nabla}(\psi^{\sharp}\alpha,v) = [U(\alpha,v),\cdot]_1, \label{eq:Structure2} \\
      &U([\alpha,\beta]_\psi,v) = \nabla_{\psi^{\sharp}\alpha}\,U(\beta,v) - \nabla_{\psi^{\sharp}\beta}\,U(\alpha,v) \label{eq:Structure3} + \nabla_{v}\,U(\alpha,\psi^{\sharp}\beta) + U(\alpha,[\psi^{\sharp}\beta,v]) - U(\beta,[\psi^{\sharp}\alpha,v]),
  \end{align}
  for all $\eta,\xi\in\Gamma E^\ast$, and $v\in\Gamma\T S$.
\end{proposition}

Equation \eqref{eq:Structure1} means that the connection $\nabla:\Gamma(\T S)\to\mathbb{D}(\Gamma E^\ast)$ 
values on derivations of the Lie algebra $\mathscr{G}:=(\Gamma E^\ast,[\cdot,\cdot]_1)$,
and \eqref{eq:Structure2} that the curvature of $\nabla$ values on $U$-adjoint operators of $\mathscr{G}$ if an argument is cotangential.
As a consequence, we have the following useful fact:

\begin{corollary}\label{cor:PartialSplitCompat}
  The coupling data $(\nabla,U)$ is compatible with $(\D,\K)$ in the sense of Definition \ref{def:NablaComp}.
\end{corollary}

\begin{proof}
  Recall that $[v,\psi^\sharp\dd{f}] = \psi^\sharp\dd{\dlie{v}f}$, for $v\in\mathrm{Poiss}(S,\psi)$ and $f\in\Cinf{S}$.
  By \eqref{eq:Structure2}, we have
  \[
    [U(\dd{f},v),\cdot]_1 = \nabla_{\psi^{\sharp}\dd{f}}\circ\nabla_v - \nabla_v\circ\nabla_{\psi^{\sharp}\dd{f}} - \nabla_{[\psi^{\sharp}\dd{f},v]}
     = \D_{\dd{f}}\circ\nabla_v - \nabla_v\circ\D_{\dd{f}} + \D_{\dd{\dlie{v}f}}.
  \]
  On the other hand, by setting $Q:=U(\cdot,v)$, we have from \eqref{eq:Structure3} that
  \begin{align*}
    -\left(\dd_{\D}Q\right)(\dd{f},\dd{g}) &= -\D_{\dd{f}}\left(Q(\dd g)\right) + \D_{\dd{g}}\left(Q(\dd f)\right) + Q(\dd\{f,g\}_\psi) \\ 
    &= -\nabla_{\psi^{\sharp}\dd{f}}U(\dd g,v) + \nabla_{\psi^{\sharp}\dd{g}}U(\dd f,v) + U([\dd{f},\dd{g}]_\psi,v) \\
    &=\phantom{-} \nabla_{v}\,U(\dd{f},\psi^{\sharp}\dd{g}) + U(\dd{f},[\psi^{\sharp}\dd{g},v]) - U(\dd{g},[\psi^{\sharp}\dd{f},v]) \\
    &= \phantom{-} \nabla_{v}\,U(\dd{f},\psi^{\sharp}\dd{g}) - U(\dd{f},\psi^{\sharp}\dd{\dlie{v}g}) + U(\dd{g},\psi^{\sharp}\dd{\dlie{v}f}) \\
    &=\phantom{-} (\nabla_{v} \circ \K)(\dd{f},\dd{g}) - \K \big( \dd\,\dlie{v}{f},\dd{g} \big) - \K \big( \dd{f},\dd\,\dlie{v}{g}\big).
  \end{align*}
\end{proof}

\paragraph{Proof of Theorem \ref{teo:PartiallySplit}.}\label{para:TeoPartialSplit}
The surjectivity of \ec{\zeta:\mathrm{H}^{1}(\PP)\to\mathrm{H}^1(S,\psi)} follow from Proposition \ref{prop:NablaSobre} and Corollary \ref{cor:PartialSplitCompat}.
Finally, since $\zeta$ is surjective, we conclude from Theorem \ref{teo:ShortExactSeq} that
        \[
          \mathrm{H}^{1}(\PP) \simeq \mathrm{H}^1(S,\psi) \oplus \mathrm{H}_{\partial_{\D}}^{1}\oplus\frac{\mathfrak{M}_{0}(\PP)}{\operatorname{Inn}{\mathscr{G}}}.
        \]
This proves Theorem \ref{teo:PartiallySplit}.


\end{document}